\theoremstyle{plain}
\newtheorem{main-theorem}{Theorem}
\newtheorem{theo}{Theorem}[section]
\newtheorem{prop}[theo]{Proposition}
\newtheorem{lemm}[theo]{Lemma}
\theoremstyle{definition}
\newtheorem{defi}[theo]{Definition}
\newtheorem{exam}[theo]{Example}
\newtheorem{rema}[theo]{Remark}
\newtheorem{ques}[theo]{Problem}
\newtheorem*{clai-nn}{Claim}
\renewcommand{\leq}{\leqslant}
\renewcommand{\geq}{\geqslant}
\newcommand{\bbR}{\mathbb{R}}
\newcommand{\bbC}{\mathbb{C}}
\newcommand{\bbD}{\mathbb{D}}
\newcommand{\Kc}{\mathcal{K}}
\newcommand{\Dc}{\mathcal{D}}
\begin{document}

\baselineskip=17pt

\title[Fibers and local connectedness of planar continua]{Fibers and local connectedness of planar continua}

\author[B. Loridant]{Beno\^it Loridant}
\address{Montanuniversit\"at Leoben,
    Franz Josefstrasse 18\\ Leoben 8700, Austria}
\email{benoit.loridant@unileoben.ac.at}
\author[J. Luo]{Jun Luo}
\address{School of Mathematics\\
    Sun Yat-Sen University\\ Guangzhou 512075, China}
\email{luojun3@mail.sysu.edu.cn}

\date{}

\begin{abstract}
We describe non-locally connected planar continua via the concepts of fiber and numerical scale.

Given a continuum $X\subset\bbC$ and $x\in\partial X$, we show that the set of points $y\in \partial X$ that cannot be separated from $x$ by any finite set $C\subset \partial X$ is a continuum. This continuum is called the {\em modified fiber} $F_x^*$ of $X$ at $x$. If $x\in X^o$, we set $F^*_x=\{x\}$. For $x\in X$, we show that $F_x^*=\{x\}$ implies that $X$ is locally connected at $x$. We also give a concrete planar continuum $X$, which is locally connected at a point $x\in X$ while the fiber $F_x^*$ is not trivial.

The scale $\ell^*(X)$ of non-local connectedness is then the least integer $p$ (or $\infty$ if such an integer does not exist) such that for each $x\in X$ there exist $k\le p+1$ subcontinua $$X=N_0\supset N_1\supset N_2\supset\cdots\supset N_{k}=\{x\}$$ such that $N_{i}$ is a fiber of $N_{i-1}$ for $1\le i\le k$. If  $X\subset\bbC$ is an unshielded continuum or a continuum whose complement has finitely many components, we obtain that local connectedness of $X$ is equivalent to the statement $\ell^*(X)=0$.

We discuss the relation of our concepts to the works of Schleicher (1999) and Kiwi (2004). We further define an equivalence relation $\sim$ based on the fibers and show that the quotient space $X/\sim$ is a locally connected continuum. For connected Julia sets of polynomials and more generally for unshielded continua, we obtain that every prime end impression is contained in a fiber. Finally, we apply our results to examples from the literature and construct for each $n\ge1$ concrete examples of path connected continua $X_n$ with $\ell^*(X_n)=n$.
\end{abstract}

\subjclass[2010]{54D05, 54H20, 37F45, 37E99.}
\keywords{ Local connectedness, fibers, numerical scale, upper semi-continuous decomposition.}

\maketitle

\section{Introduction and main results}

Motivated by the construction of Yoccoz puzzles used in the study on local connectedness of quadratic Julia sets and the Mandelbrot set $\mathcal{M}$, Schleicher~\cite{Schleicher99a} introduces the notion of fiber for full continua (continua  $M\subset\bbC$ having a connected complement $\bbC\setminus M$), based on ``separation lines'' chosen from particular countable dense sets of external rays that land on  points of $M$. Kiwi \cite{Kiwi04} uses finite ``cutting sets'' to define a modified version of fiber for Julia sets, even when they are not connected.

Jolivet-Loridant-Luo~\cite{JolivetLoridantLuo0000} replace Schleicher's ``separation lines'' with ``good cuts'', {\em i.e.}, simple closed curves $J$ such that $J\cap \partial M$ is finite and $J\setminus M\ne\emptyset$. In this way, Schleicher's approach is generalized to continua $M\subset\bbC$ whose complement $\bbC\setminus M$ has finitely many components. For such a continuum $M$, the {\em pseudo-fiber $E_x$} (of $M$) at a point $x\in M$ is the collection of the points $y\in M$ that cannot be separated from $x$ by a good cut; the {\em fiber $F_x$ at $x$} is the component of $E_x$ containing $x$. Here, a point $y$ is separated from a point $x$ by a simple closed curve $J$ provided that $x$ and $y$ belong to different components of $\bbC\setminus J$. And $x$ may belong to the bounded or unbounded component of $\bbC\setminus J$.

Clearly, the fiber $F_x$ at $x$ always contains $x$. We say that a pseudo-fiber or a fiber is {\em trivial} if it coincides with the single point set $\{x\}$.

By \cite[Proposition 3.6]{JolivetLoridantLuo0000}, every fiber of $M$ is again a continuum with finitely many complementary components. Thus the hierarchy by ``fibers of fibers'' is well defined. Therefore, the \emph{scale} $\ell(M)$ \emph{of non-local connectedness} is defined as the least integer $k$ such that for each $x\in M$ there exist $p\le k+1$ subcontinua $M=N_0\supset N_1\supset\cdots\supset N_{p}=\{x\}$ such that $N_{i}$ is a fiber of $N_{i-1}$ for $1\le i\le p$. If such an integer $k$ does not exist we set $\ell(M)=\infty$.

In this paper, we rather follow Kiwi's approach~\cite{Kiwi04} and define ``modified fibers'' for continua on the plane. The key point is: Kiwi focuses on Julia sets and uses ``finite cutting sets'' that consist of pre-periodic points, but we consider arbitrary continua $M$ on the plane (which may have interior points) and use ``finite separating sets''.
We refer to Example~\ref{cutting-2} for the difference between separating and cutting sets. Moreover, in Jolivet-Loridant-Luo~\cite{JolivetLoridantLuo0000}, a good cut is not contained entirely in the underlying continuum $M$. In the curent paper we will remove this assumption and only require that a good cut is a simple closed curve intersecting $\partial M$ at finitely many points.  After this slight modification we can establish the equivalence between the above mentioned two approaches to define fiber, using good cuts or using finite separating sets. See  Remark \ref{modified}  for further details.

The notions and results will be presented in a way that focuses on the general topological aspects, rather than in the framework of complex analysis and dynamics.

\begin{defi}\label{kiwi-fiber}  Let $X\subset\bbC$ be a continuum. We will say that a point $x\in \partial X$ is \emph{separated from a point} $y\in \partial X$ \emph{by a subset} $C\subset X$  if there is a \emph{separation} $\partial X\setminus C=A\cup B$ with $x\in A$ and $y\in B$. Here `` $\partial X\setminus C=A\cup B$ is a separation'' means that $\overline{A}\cap B=A\cap\overline{B}=\emptyset$.
\begin{itemize}
\item The \emph{ modified pseudo-fiber $E^*_x$ of $X$}  at a point $x$ in the interior $X^o$ of $X$ is $\{x\}$; and the \emph{modified pseudo-fiber $E^*_x$ of $X$}  at a point $x\in \partial X$ is the set of the points $y\in \partial X$ that cannot be separated from $x$ by any finite set $C\subset \partial X$.
\item The \emph{modified fiber} $F_x^*$ of $X$
at $x$ is the connected component of $E^*_x$ containing $x$. We say $E^*_x$ or $F_x^*$ is \emph{trivial} if it consists of the point $x$ only. ({\em We will show that $E^*_x=F^*_x$ in Theorem \ref{main-1}, so  the notion of modified pseudo-fiber is only used as a formal definition.})
\item We inductively define a {\em fiber of order} $k\ge2$ as a fiber of a continuum $Y\subset X$, where $Y$ is a fiber of order $k-1$.
\item The {\em local scale of non-local connectedness} of $X$ at a point $x\in X$, denoted $\ell^*(X,x)$, is the least integer $p$ such that there exist $k\le p+1$ subcontinua $$X=N_0\supset N_1\supset N_2\supset\cdots\supset N_{k}=\{x\}$$ such that $N_{i}$ is a fiber of $N_{i-1}$ for $1\le i\le k$. If such an integer does not exist we set $\ell^*(X,x)=\infty$.
\item The (global) {\em scale of non-local connectedness} of $X$ is 
$$\ell^*(X)=\sup\{\ell^*(X,x): x\in X\}.$$ We also call $\ell^*(X,x)$ the {\em local NLC-scale of $X$ at $x$}, and $\ell^*(X)$ the {\em global NLC-scale}.
\end{itemize}
\end{defi}

We firstly obtain the equality $F_x^*=E_x^*$ and relate trivial fibers to local connectedness. Here, local connectedness at a particular point does not imply trivial fiber. In particular, let $\Kc\subset[0,1]$ be Cantor's ternary set, let $X$ be the union of $\Kc\times[0,1]$ with $[0,1]\times\{1\}$. See Figure \ref{comb}.  Then $X$ is locally connected at every $x=(t,1)$ with $t\in\Kc$, while the modified fiber $F_x^*$ at this point is the whole segment $\{t\}\times[0,1]$. See Example \ref{cantor-comb} for more details.
\begin{main-theorem}\label{main-1}
Let $X\subset\bbC$ be a continuum. Then $F_x^*=E_x^*$ for every $x\in X$; moreover, $F_x^*=\{x\}$ implies that $X$ is locally connected at $x$.
\end{main-theorem}

Secondly, we characterize modified fibers $F^*_x=E_x^*$ through simple closed curves $\gamma$ that separate $x$ from points $y$ in $X\setminus F^*_x$ and that intersect $\partial X$ at a finite set or an empty set.

This provides an equivalent way to develop the theory of fibers, for planar continua, and leads to a partial converse for the second part of Theorem \ref{main-1}. See Remark \ref{modified}.

\begin{main-theorem}\label{criterion}
Let  $X\subset\bbC$ be a continuum. Then $F^*_x$ at any point $x\in X$ consists of the points $y\in X$ such that every simple closed curve $\gamma$ separating $x$ from $y$ must intersect $\partial X$ at infinitely many points. Or, equivalently, $X\setminus F_x^*$ consists of the points $z\in X$ which may be separated from $x$ by a simple closed curve $\gamma$ such that $\gamma\cap\partial X$ is a finite set.
\end{main-theorem}
This criterion can be related to Kiwi's characterization of fibers~\cite[Corollary 2.18]{Kiwi04}, as will be explained at the end of Section~\ref{proof-criterion}.
\begin{rema}\label{modified}
We define a simple closed curve $\gamma$ to be a \emph{ good cut of a continuum} $X\subset\bbC$ if $\gamma\cap\partial X$ is a finite set (the empty set is also allowed). We also say that  two points $x,y \in X$ are separated by a good cut $\gamma$  if they lie in different components of $\bbC\setminus \gamma$. This slightly weakens the requirements on ``good cuts'' in \cite{JolivetLoridantLuo0000}. Therefore, given a continuum $X\subset\bbC$ whose complement has finitely many components, the modified pseudo-fiber $E_x^*$ at any point $x\in X$ is a subset of the pseudo-fiber $E_x$ at $x$, if $E_x$ is defined as in \cite{JolivetLoridantLuo0000}. Consequently, we can infer that local connectedness of $X$ implies triviality of all the fibers $F_x^*$, by citing two of the four equivalent statements of \cite[Theorem 2.2]{JolivetLoridantLuo0000}: (1) $X$ is locally connected; (2) every pseudo-fiber $E_x$ is trivial.  The same result does not hold when the complement $\bbC\setminus X$ has infinitely many components. Sierpinski's universal curve gives a counterexample.
\end{rema}

\begin{rema}\label{two-approaches}
The two approaches, via pseudo-fibers $E_x$ and modified pseudo-fibers $E_x^*$, have their own merits. The former one follows Schleicher's approach and is more closely related to the theory of puzzles in the study of Julia sets and the Mandelbrot set; hence it may be used to analyse the structure of such continua by cultivating the dynamics of polynomials. The latter approach has a potential to be extended to the study of general compact metric spaces; and, at the same time, it is directly connected with the first approach when restricted to planar continua.
\end{rema}

Thirdly, we study the topology of $X$ by constructing an equivalence relation $\sim$ on $X$ and  cultivating the quotient space $X/\!\sim$, which will be shown to be a locally connected continuum. This relation $\sim$ is based on fibers of $X$ and every fiber $F_x^*$ is contained in a single equivalence class.

\begin{defi}\label{def:eqrel} Let  $X\subset\bbC$ be a continuum.
 Let $X_0$ be the union of all the nontrivial fibers $F_x^*$ for $x\in X$ and $\overline{X_0}$ denote the closure  of $X_0$. We define $x\sim y$ if $x=y$ or if $x\ne y$  belong to the same component of $\overline{X_0}$. Then $\sim$ is a closed equivalence relation on $X$ such that, for all $x\in X$, the equivalence class $[x]_\sim$ always contains the modified fiber $F_x^*$ and equals  $\{x\}$ if only $x\in (X\setminus \overline{X_0})$. Consequently, every equivalence class $[x]_\sim$ is a continuum, so that the natural projection $\pi(x)=[x]_\sim$ is a monotone mapping, from $X$ onto its quotient $X/\!\sim$.
\end{defi}

\begin{rema} Actually, there is a more natural equivalence relation $\approx$ by defining $x \approx y$ whenever there exist points $x_1=x,$ $x_2,\ldots, x_n=y$ in $X$ such that $x_i\in F^*_{x_{i-1}}$. However, the relation $\approx$ may not be closed, as a subset of the product $X\times X$. On the other hand, if we take the closure of $\approx$ we will obtain a closed relation, which is reflexive and symmetric but may not be transitive (see Example~\ref{relations}). The above Definition~\ref{def:eqrel} solves this problem.
\end{rema}

The following theorem provides important information about the topology of $X/\!\sim$.

\begin{main-theorem}\label{main-3}
 Let  $X\subset\bbC$ be a continuum. Then $X/\!\sim$ is metrizable and is a locally connected continuum, possibly a single point.
\end{main-theorem}

\begin{rema}
The result of Theorem \ref{main-3} is of fundamental significance from the viewpoint of topology. It also plays a crucial role in the study of complex dynamics. In particular, if $J$ is the Julia set (assumed to be connected) of a polynomial $f(z)$ with degree $n\ge2$ the restriction $\left.f\right|_J: J\rightarrow J$ induces a continuous map $f_\sim: J/\!\sim\rightarrow J/\!\sim$ such that $\pi\circ f=f_\sim\circ\pi$. See Theorem \ref{dynamic}. Moreover, the modified fibers $F_x^*$ are closely related to impressions of prime ends. See Theorem \ref{impression}. Combining this with laminations on the unit circle $S^1\subset\bbC$, the system $f_\sim: J/\!\sim\rightarrow J/\!\sim$ is also a factor of the map $z\mapsto z^n$ on $S^1$. However, it is not known yet whether the the decomposition $\{[x]_\sim: x\in X\}$ by classes of $\sim$ coincide with the finest locally connected model discussed in \cite{BCO11}. For more detailed discussions related to the dynamics of polynomials, see for instance \cite{BCO11,Kiwi04} and references therein.
\end{rema}

Finally, to conclude the introduction, we propose two problems.

\begin{ques}\label{scale-model}
To estimate the scale $\ell^*(X)$ from above for particular continua $X\subset\bbC$ such that $\bbC\setminus X$ has finitely many components, and to compute the quotient space $X/\!\sim$ or the locally connected model introduced in \cite{BCO11}. The Mandelbrot set or the Julia set of an infinitely renormalizable quadratic polynomial (when this Julia set is not locally connected) provide very typical choices of $X$. In particular, the scale $\ell^*(X)$ will be zero if the Mandelbrot set is locally connected, \emph{i.e.}, if \emph{MLC} holds. In such a case, the relation $\sim$ is trivial and its quotient is immediate.
\end{ques}

\begin{rema}
Section \ref{examples} gives several examples of continua $X\subset\bbC$. We obtain the decomposition $\{[x]_\sim:x\in X\}$ into sub-continua  and represent the quotient space $X/\!\sim$ on the plane. For those examples, the scale $\ell^*(X)$ is easy to determine.
\end{rema}

\begin{ques}\label{finest} Given an unshielded continuum $X$ in the plane,
is it possible to construct the ``finest'' upper semi-continuous decomposition of $X$ into sub-continua that consist of fibers, in order that the resulted quotient space is  a locally connected continuum (or has the two properties mentioned in Theorem~\ref{main-3})? Such a finest decomposition has no refinement that has the above properties. If $X$ is the Sierpinski curve, which is not unshielded, the decomposition $\left\{[x]_\sim:x\in X\right\}$  obtained in Theorem \ref{main-3} does not suffice.
\end{ques}

\begin{rema}
The main motivation for Problem \ref{finest} comes from \cite{BCO11} in which the authors, Blokh-Curry-Oversteegen, consider ``unshielded'' continua $X\subset\bbC$ which coincide with the boundary of the unbounded component of $\bbC\setminus X$. They obtain the existence of the finest monotone map $\varphi$ from $X$ onto a locally connected continuum on the plane, such that $\varphi(X)$ is the finest locally connected model of $X$ and extend $\varphi$ to a map $\hat{\varphi}: \hat{\bbC}\rightarrow\hat{\bbC}$ that maps $\infty$ to $\infty$, collapses only those components of $\bbC\setminus X$ whose boundary is collapsed by $\varphi$, and is a homeomorphism elsewhere in $\hat{\bbC}\setminus X$ \cite[Theorem 1]{BCO11}. This is of significance in the study of complex polynomials with connected Julia set, see \cite[Theorem 2]{BCO11}.
\end{rema}

\begin{rema}
The equivalence classes $[x]_\sim$ obtained in this paper give a concrete upper semi-continuous decomposition of an arbitrary continuum $X$ on the plane, with the property that the quotient space $X/\!\sim$ is a locally connected continuum. In the special case $X$ is unshielded, the finest decomposition in~\cite[Theorem 1]{BCO11} is finer than or equal to our decomposition $\{[x]_\sim:x\in X\}$. See Theorem~\ref{impression} for details when $X$ is assumed to be unshielded. The above Problem~\ref{finest} asks whether those two decompositions actually coincide. If the answer is yes, the quotient space $X/\!\sim$ in Theorem \ref{main-3} is exactly the finest locally connected model of $X$, which shall be in some sense ``computable''. Here, an application of some interest is to study the locally connected model of an infinitely renormalizable Julia set~\cite{Jiang00} or of the Mandelbrot set, as mentioned in Problem \ref{scale-model}.
\end{rema}

We arrange our paper as follows. Section~\ref{basics} recalls some basic notions and results from topology that are closely related to local connectedness.  Sections~\ref{proof-1},~\ref{proof-criterion} and~\ref{proof-3} respectively prove Theorems~\ref{main-1},~\ref{criterion} and~\ref{main-3}. Section~\ref{fiber-basics} discusses basic properties of fibers, studies fibers from a viewpoint of dynamic topology (as proposed by Whyburn \cite[pp.130-144]{Whyburn79}) and relates the theory of fibers to the theory of prime ends for unshielded continua. Finally, in Section~\ref{examples}, we illustrate our results through examples from the literature and give an explicit sequence of path connected continua $X_n$ satisfying $\ell^*(X_n)=n$.

\section{A Revisit to Local Connectedness}\label{basics}

\begin{defi}\label{lc-notion}
A topological space $X$ is \emph{locally connected at} a point $x_0\in X$ if for any neighborhood $U$ of $x_0$ there exists a connected neighborhood $V$ of $x_0$ such that $V\subset U$, or equivalently, if the component of $U$ containing $x_0$ is also a neighborhood of $x_0$. The space $X$ is then called \emph{locally connected} if it is locally connected at every of its points.
\end{defi}
We focus on metric spaces and their subspaces. The following characterization can be found as the definition of locally connectedness in~\cite[Part A, Section XIV]{Whyburn79}.

\begin{lemm}\label{local-criterion}
A metric space $(X,d)$ is locally connected at $x_0\in X$ if and only if for any $\varepsilon>0$ there exists $\delta>0$ such that any point $y\in X$ with $d(x_0,y)<\delta$ is contained together with $x_0$ in a connected subset of $X$ of diameter less than $\varepsilon$.
\end{lemm}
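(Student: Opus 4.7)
The plan is to prove the two implications separately, with each direction essentially a matter of relating diameters of connected sets to inclusion in open balls.

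For the forward implication, assume $X$ is locally connected at $x_0$. Given $\varepsilon>0$, consider the open ball $U=B(x_0,\varepsilon/3)$, which is a neighborhood of $x_0$. By Definition~\ref{lc-notion} there exists a connected neighborhood $V$ of $x_0$ with $V\subset U$. Pick $\delta>0$ small enough that $B(x_0,\delta)\subset V$. Then $\diam(V)\leq \diam(U)\leq 2\varepsilon/3 <\varepsilon$, and for every $y\in B(x_0,\delta)$ the connected set $V$ contains both $x_0$ and $y$ and has diameter less than $\varepsilon$. The slight shrinkage from $\varepsilon$ to $\varepsilon/3$ is what ensures the strict diameter inequality.

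For the reverse implication, suppose the metric condition holds and let $U$ be an arbitrary neighborhood of $x_0$. Choose $\varepsilon>0$ with $B(x_0,\varepsilon)\subset U$, and let $\delta>0$ be as supplied by the hypothesis. Denote by $V$ the connected component of $U$ containing $x_0$. I want to show that $B(x_0,\delta)\subset V$, which will prove $V$ is a neighborhood of $x_0$ and thus yield local connectedness via the equivalent formulation in Definition~\ref{lc-notion}. For any $y\in B(x_0,\delta)$, the hypothesis provides a connected subset $C_y\subset X$ containing both $x_0$ and $y$ with $\diam(C_y)<\varepsilon$. Since $x_0\in C_y$, the diameter bound forces $C_y\subset B(x_0,\varepsilon)\subset U$. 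As $C_y$ is a connected subset of $U$ meeting the component $V$, it must lie entirely in $V$; in particular $y\in V$.

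Neither direction poses a real obstacle; the only subtlety is bookkeeping with strict versus non-strict inequalities on diameters, which is handled by inserting the factor $1/3$ in the forward direction and by exploiting that a connected set of diameter less than $\varepsilon$ containing $x_0$ sits inside $B(x_0,\varepsilon)$ in the reverse direction. The proof therefore reduces to two short verifications once $V$ is identified in each case.
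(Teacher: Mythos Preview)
Your argument is correct in both directions. The paper itself does not give a proof of this lemma; it simply records the statement and points to \cite[Part~A, Section~XIV]{Whyburn79}, where this $\varepsilon$--$\delta$ formulation is taken as the \emph{definition} of local connectedness, so the equivalence is immediate from Whyburn's standpoint. Your write-up supplies precisely the standard bridge between that definition and Definition~\ref{lc-notion}, and there is nothing to amend.
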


When $X$ is compact, Lemma \ref{local-criterion} is a local version of \cite[p.183, Lemma 17.13(d)]{Milnor06}. For the convenience of the readers, we give here the concrete statement as a lemma.

\begin{lemm}\label{global-criterion}
A compact metric space $X$ is locally connected if and only if for every $\varepsilon>0$ there exists $\delta>0$ so that any two points of distance less than $\delta$ are contained in a connected subset of $X$ of diameter less than $\varepsilon$.
\end{lemm}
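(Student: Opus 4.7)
My plan is to prove the two directions separately, with the backward implication being essentially immediate from Lemma~\ref{local-criterion} and the forward direction relying on compactness via a Lebesgue number argument.

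For the easy direction ($\Leftarrow$), I would simply observe that the uniform hypothesis, specialized to a single point $x_0 \in X$ and any $\varepsilon > 0$, produces a $\delta > 0$ such that every $y$ with $d(x_0, y) < \delta$ lies with $x_0$ in a connected subset of diameter less than $\varepsilon$. This is verbatim the criterion of Lemma~\ref{local-criterion}, so $X$ is locally connected at $x_0$, hence everywhere.

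For the forward direction ($\Rightarrow$), fix $\varepsilon > 0$. For each $x \in X$, apply Definition~\ref{lc-notion} to the open neighborhood $B(x, \varepsilon/3)$ to obtain a connected neighborhood $V_x$ of $x$ contained in $B(x, \varepsilon/3)$; in particular $\diam(V_x) \leq 2\varepsilon/3 < \varepsilon$, and the interior $V_x^\circ$ is an open set containing $x$. The family $\{V_x^\circ : x \in X\}$ is then an open cover of the compact metric space $X$, so by the Lebesgue number lemma there exists $\delta > 0$ such that any subset of $X$ of diameter less than $\delta$ lies in some single $V_x^\circ$. Given two points $p, q \in X$ with $d(p, q) < \delta$, the two-point set $\{p, q\}$ has diameter less than $\delta$, hence is contained in some $V_x$, which is a connected set of diameter less than $\varepsilon$.

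The only subtlety I anticipate is to make sure that the sets to which the Lebesgue number lemma is applied are genuinely open: Lemma~\ref{local-criterion} by itself only promises connected witnesses, not neighborhoods, so I need to appeal to Definition~\ref{lc-notion} directly (rather than to Lemma~\ref{local-criterion}) in order to get open connected neighborhoods of small diameter to cover $X$. Once that is arranged, compactness does all the remaining work and no further analysis is needed.
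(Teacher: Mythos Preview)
Your argument is correct. The backward direction is indeed immediate from Lemma~\ref{local-criterion}, and for the forward direction your Lebesgue number argument is the standard one: the only care needed is exactly the point you flag, namely that the connected witnesses must be \emph{neighborhoods} so that their interiors form an open cover, and you obtain this by invoking Definition~\ref{lc-notion} directly.

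As for comparison with the paper: there is nothing to compare. The paper does not prove Lemma~\ref{global-criterion}; it merely records the statement and attributes it to \cite[p.183, Lemma~17.13(d)]{Milnor06}. Your write-up therefore supplies what the paper omits, and the approach you take is the expected one.
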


Using Lemma \ref{local-criterion}, we obtain a fact concerning continua of the Euclidean space $\mathbb{R}^n$.

\begin{lemm}\label{fact-1}
Let $X\subset\mathbb{R}^n$ be a continuum and $U=\bigcup_{\alpha\in I}W_\alpha$ the union of any collection $\{W_\alpha: \alpha\in I\}$ of components of $\mathbb{R}^n\setminus X$. If $X$ is locally connected at $x_0\in X$, then so is $X\cup U$. Consequently, if $X$ is locally connected, then  so is $X\cup U$.
\end{lemm}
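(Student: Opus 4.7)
The plan is to verify the criterion of Lemma~\ref{local-criterion} for $X \cup U$ at the point $x_0$. Only the case $x_0 \in X$ needs real work: if $x_0 \in U$, then $U$ is open in $\mathbb{R}^n$ and already contains a Euclidean ball around $x_0$ inside $X \cup U$, so local connectedness at $x_0$ is automatic; this same remark also reduces the ``consequently'' statement to its pointwise version.

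Fix $\varepsilon > 0$. First I would construct a reference connected set $V \subset X$ of small diameter that covers an $X$-neighborhood of $x_0$. Applying Lemma~\ref{local-criterion} to $X$ at $x_0$ with parameter $\varepsilon/4$ produces $r > 0$ such that every $z \in X$ with $d(z, x_0) < r$ lies, together with $x_0$, in some connected $C_z \subset X$ of diameter less than $\varepsilon/4$. Setting $V := \bigcup_{z \in X \cap B(x_0, r)} C_z$, the common point $x_0$ keeps $V$ connected, one has $V \subset B(x_0, \varepsilon/4)$, and $X \cap B(x_0, r) \subset V$ by construction.

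Next, set $\delta := \min(r, \varepsilon/4)$ and take any $y \in X \cup U$ with $d(y, x_0) < \delta$. If $y \in X$, then $y \in V$ and $V$ itself supplies a connected subset of $X \cup U$ of diameter less than $\varepsilon$ containing $y$ and $x_0$. If instead $y \in W_\alpha$ for some $\alpha \in I$, the key geometric step is to look at the Euclidean segment $\ell(t) = (1-t) y + t x_0$, $t \in [0,1]$, and put $t_0 := \inf\{t \in [0,1] : \ell(t) \in X\}$. Since $\ell(0) = y \notin X$ and $\ell(1) = x_0 \in X$, one has $t_0 \in (0,1]$ and $\ell(t_0) \in X$; moreover $\ell([0, t_0))$ is a connected subset of $\mathbb{R}^n \setminus X$ containing $y$, hence lies entirely in the single component $W_\alpha$. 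From $d(\ell(t), x_0) = (1-t)\, d(y, x_0) \leq d(y, x_0) < \delta$, the whole segment $\ell([0, t_0])$ is contained in $B(x_0, \varepsilon/4)$, and its endpoint $\ell(t_0)$ lies in $X \cap B(x_0, r) \subset V$.

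Pasting $\ell([0, t_0])$ to $V$ along the common point $\ell(t_0)$ yields a connected subset of $X \cup U$, contained in $B(x_0, \varepsilon/4)$, hence of diameter less than $\varepsilon$, and containing both $y$ and $x_0$. Lemma~\ref{local-criterion} then certifies local connectedness of $X \cup U$ at $x_0$, and the global assertion follows by running the argument at every $x_0 \in X$. The main obstacle I anticipate is the construction of $V$, where one must simultaneously control its diameter and force it to cover an entire $X$-neighborhood of $x_0$; once $V$ is in hand, the segment-to-$x_0$ trick uniformly absorbs points coming from arbitrarily many complementary components $W_\alpha$, regardless of how large or numerous they may be.
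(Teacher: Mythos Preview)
Your proof is correct and follows essentially the same approach as the paper: both use the straight segment from $y$ toward $x_0$, take the first point where it meets $X$, and join that sub-segment to a small connected subset of $X$ furnished by Lemma~\ref{local-criterion}. The only cosmetic difference is that you assemble a single reference set $V$ in advance, whereas the paper produces a fresh connecting continuum $A$ for each $y$; your bookkeeping with $\varepsilon/4$ is in fact slightly tidier than the paper's.
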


\begin{proof}
Choose $\delta$ with properties from Lemma \ref{local-criterion} with respect to $x_0, X$ and $\varepsilon/2$. For any $y\in U$ with $d(x_0,y)<\delta$ we consider the segment $[x_0,y]$ between $x_0$ and $y$. If $[x_0,y]\subset(X\cup U)$, we are done. If not, choose the point $z\in ([x_0,y]\cap X)$ that is closest to $y$. Clearly, the segment $[y,z]$ is contained in $X\cup U$. By the choice of $\delta$ and Lemma \ref{local-criterion}, we may connect $z$ and $x_0$ with a continuum $A\subset X$ of diameter less than $\varepsilon/2$. Therefore, the continuum $B:=A\cup[y,z]\subset(X\cup U)$ is of diameter at most $\varepsilon$ as desired.
\end{proof}

In the present paper, we are mostly interested in continua on the plane, especially continua $X$ which are on the boundary of a continuum $M\subset\bbC$. Typical choice of such a continuum $M$ is the filled Julia set of a rational function. Several fundamental results from Whyburn's book~\cite{Whyburn79} will be very helpful in our study.

The first result gives a fundamental fact about a continuum failing to be locally connected at one of its points. The proof can be found in~\cite[p.124, Corollary]{Whyburn79}.

\begin{lemm}\label{non-lc}
A continuum $M$ which is not locally connected at a point $p$ necessarily fails to be locally connected at all points of a nondegenerate subcontinuum of $M$.
\end{lemm}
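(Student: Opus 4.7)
My plan is to build a ``continuum of convergence'' at $p$ by applying the contrapositive of Lemma \ref{local-criterion}, the Boundary Bumping Theorem, and the Blaschke selection theorem, and then show that local connectedness of $M$ fails everywhere on this continuum by a three-piece diameter estimate.

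First, I will translate the hypothesis using Lemma \ref{local-criterion}: the failure of local connectedness at $p$ produces some $\varepsilon_0 > 0$ and a sequence $p_n \to p$ in $M$ with $d(p_n, p) < 1/n$ such that no subcontinuum of $M$ of diameter less than $\varepsilon_0$ contains both $p$ and $p_n$. I will then fix a small radius $r := \varepsilon_0/10$ and let $E_n$ be the component of $M \cap \overline{B(p, r)}$ containing $p_n$. Two observations feed the rest of the argument: first, $p \notin E_n$, since otherwise $E_n$ would be a subcontinuum of $M$ of diameter at most $2r < \varepsilon_0$ containing both $p$ and $p_n$; second, since $\diam(M) \geq \varepsilon_0 > 2r$, the set $M \cap \overline{B(p,r)}$ is a proper closed subset of the continuum $M$, so by the Boundary Bumping Theorem each component $E_n$ meets $\{x \in M : d(x, p) = r\}$, which together with $p_n \in E_n$ forces $\diam(E_n) \geq r - 1/n$.

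Next I will extract a good subsequence. Distinct components of $M \cap \overline{B(p,r)}$ are pairwise disjoint, and if infinitely many $E_n$ coincided with a single component $E^{*}$, then $p_n \to p$ would force $p \in E^{*}$, contradicting $p \notin E_n$. Hence I may pass to a subsequence along which the $E_n$ are pairwise distinct and pairwise disjoint, and then, by the Blaschke selection theorem, to a further subsequence converging in the Hausdorff metric to a nonempty continuum $F \subset M \cap \overline{B(p, r)}$. From $p_n \in E_n$ and $p_n \to p$ we obtain $p \in F$, and from $\diam(E_n) \geq r - 1/n$ we obtain $\diam(F) \geq r$, so $F$ is a nondegenerate subcontinuum of $M$ containing $p$.

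The main obstacle, and the heart of the proof, is showing that $M$ is not locally connected at any $q \in F$. Fix such a $q$ and pick $y_n \in E_n$ with $y_n \to q$ using Hausdorff convergence. I will argue by contradiction: if $M$ were locally connected at $q$, then by Lemma \ref{local-criterion} applied with $\varepsilon = r$, for all sufficiently large $n$ there would exist a subcontinuum $K_n \subset M$ containing $q$ and $y_n$ with $\diam(K_n) < r$. Since $q \in F \cap K_n$ and $y_n \in K_n \cap E_n$, the union $F \cup K_n \cup E_n$ would then be a continuum in $M$ containing $p$ (via $F$) and $p_n$ (via $E_n$), of diameter at most
\[
\diam(F) + \diam(K_n) + \diam(E_n) \leq 2r + r + 2r = \varepsilon_0/2 < \varepsilon_0,
\]
contradicting the defining property of $\varepsilon_0$ and $p_n$. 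Hence $M$ fails to be locally connected at every $q \in F$, and $F$ is the desired nondegenerate subcontinuum. The one sensitive calibration is the choice of $r$ relative to $\varepsilon_0$, made small at the very beginning precisely so that the three diameters $\diam(F)$, $\diam(K_n)$, $\diam(E_n)$ add up to strictly less than $\varepsilon_0$ in the final step.
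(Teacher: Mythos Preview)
Your proof is correct: the continuum-of-convergence argument via boundary bumping and Hausdorff-limit extraction is exactly the classical route, and your calibration $r=\varepsilon_0/10$ makes the final three-term diameter bound go through cleanly. The paper itself does not prove this lemma but simply cites \cite[p.124, Corollary]{Whyburn79}; your argument is essentially Whyburn's, and it is also the same mechanism the paper later reuses in the proof of Theorem~\ref{trivial-fiber} (components $Q_k$ of $B(x,r)\cap X$ reaching $\partial B(x,r)$, then a Hausdorff limit), so nothing is out of line.

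Two cosmetic remarks. First, what you call the ``Blaschke selection theorem'' is really just compactness of the hyperspace $2^M$ under the Hausdorff metric; in continuum theory Blaschke's name is usually reserved for the convex-body version, so you may want to phrase it as the paper does. Second, Lemma~\ref{local-criterion} produces a \emph{connected subset} of small diameter, not a subcontinuum; since in the last step you only need $F\cup K_n\cup E_n$ to be a connected subset of $M$ of diameter $<\varepsilon_0$, there is no need to upgrade $K_n$ to a continuum, and your contradiction still follows verbatim from the contrapositive of Lemma~\ref{local-criterion}.
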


The second result will be referred to as {\bf Torhorst Theorem} in this paper (see~\cite[p.124, Torhorst Theorem]{Whyburn79}
and~\cite[p.126, Lemma 2]{Whyburn79}).

\begin{lemm}\label{torhorst}

The boundary $B$ of each component $C$ of the complement of a locally connected continuum $M$ is itself a locally connected continuum. If further $M$ has no cut point, then $B$  is a simple closed curve.

\end{lemm}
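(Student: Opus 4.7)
I will split the proof along the two conclusions.

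\textbf{Part 1.} To show $B$ is a locally connected continuum, I would first observe that $B=\partial C\subset M$ is closed in the plane, hence compact. Connectedness of $B$ is classical: a short route is to notice that $\hat{\bbC}\setminus C=M\cup\bigcup_{C'\ne C}\overline{C'}$ is connected (each $\overline{C'}$ meets the connected set $M$), so $C$ is simply connected in $\hat{\bbC}$, and then to invoke the standard equivalence that a connected open $U\subset\hat{\bbC}$ is simply connected if and only if $\partial U$ is connected. For local connectedness of $B$, I would argue by contradiction. Suppose $B$ fails to be locally connected at some point; then by Lemma~\ref{non-lc} applied to $B$, it fails on a nondegenerate subcontinuum $K\subset B$. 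Pick $p\in K$ and $\varepsilon>0$ with $\varepsilon<\diam K/2$; arbitrarily close to $p$ there are points $y\in B$ that cannot be joined to $p$ inside $B$ by any continuum of diameter less than $\varepsilon$. On the other hand, Lemma~\ref{local-criterion} applied to $M$ yields $\delta>0$ such that every $y\in M$ with $d(p,y)<\delta$ is joined to $p$ by a subcontinuum $L_y\subset M$ with $\diam L_y<\varepsilon/3$.

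The key step is to modify $L_y$ into a subcontinuum of $B$ of diameter less than $\varepsilon$: for each complementary component $C'$ of $M$ whose interior is entered by $L_y$, I would replace $L_y\cap\overline{C'}$ by an arc along $\partial C'\subset M$. Since $M$ is locally connected, the complementary components of $M$ form a null family (for each $\eta>0$ only finitely many have diameter exceeding $\eta$), so small components contribute small perturbations, and the finitely many larger ones can be handled case by case; careful bookkeeping keeps the diameter under $\varepsilon$. This produces a subcontinuum of $B$ joining $p$ to $y$ of diameter less than $\varepsilon$, contradicting the choice of $y$.

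\textbf{Part 2.} Assume now that $M$ has no cut points. From Part 1, $B$ is a nondegenerate locally connected continuum bounding the simply connected domain $C\subset\hat{\bbC}$. I would invoke the classical Carath\'eodory/Schoenflies-type characterization: a locally connected continuum without cut points that bounds a simply connected domain in $\hat{\bbC}$ is a simple closed curve. It remains to transfer the no-cut-point hypothesis from $M$ to $B$. If $p\in B$ were a cut point of $B$ with separation $B\setminus\{p\}=B_1\cup B_2$, then using the structure of $\overline{C}$ and the simple connectedness of $C$, one shows that no component of $M\setminus B$ has closure meeting both $B_1$ and $B_2$ (otherwise a loop around $C$ could be produced, contradicting simple connectedness of $C$ in $\hat{\bbC}$). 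One can thus partition the components of $M\setminus B$ into those adhering to $B_1$ and those adhering to $B_2$, yielding a separation of $M\setminus\{p\}$ and hence a cut point of $M$, which is excluded.

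The principal obstacle is the diameter-controlled modification in Part 1: replacing the excursions of $L_y$ through complementary domains by arcs along their frontiers while keeping the diameter under $\varepsilon$. Once this is in hand, Part 2 reduces to classical plane-topology characterizations together with the cut-point transfer from $B$ to $M$.
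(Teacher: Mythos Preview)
First, note that the paper does not prove this lemma: it is quoted from Whyburn's book (pp.~124 and 126), so there is no in-paper argument to compare against.

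Your Part~1 contains a genuine gap in the ``key step.'' You take $L_y\subset M$ (obtained from local connectedness of $M$) and then speak of ``each complementary component $C'$ of $M$ whose interior is entered by $L_y$.'' But $L_y\subset M$, so it never enters any complementary component of $M$; there is nothing to replace. Presumably you intend components of $\bbC\setminus B$ other than $C$: the continuum $L_y$ may well leave $B$ and enter such a region $C'$, and you would like to push it back onto $\partial C'\subset B$. Two problems then arise. First, to obtain ``an arc along $\partial C'$'' you need $\partial C'$ to be locally connected (or at least arcwise connected), which is precisely what you are trying to prove for $B=\partial C$; the argument is circular. Second, the null-family property you invoke is a consequence of local connectedness of $M$ and applies to the components of $\bbC\setminus M$, not a priori to those of $\bbC\setminus B$; again you would be assuming the conclusion. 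The standard proofs of Torhorst's theorem (as in Whyburn) avoid this circularity by other means, for instance via accessibility of boundary points from $C$ combined with plane-topology separation arguments.

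Your Part~2 strategy is sound in outline: once $B$ is known to be a locally connected continuum bounding the simply connected domain $C\subset\hat\bbC$, showing that $B$ has no cut point does force $B$ to be a simple closed curve (via Carath\'eodory's boundary extension, the map $\bbD\to\overline C$ is then a homeomorphism on $\partial\bbD$). However, your cut-point transfer from $M$ to $B$ is not justified as written; the assertion that ``no component of $M\setminus B$ has closure meeting both $B_1$ and $B_2$'' is exactly the hard part and your parenthetical about loops does not settle it. A cleaner route bypasses $M\setminus B$ entirely: if $p$ were a cut point of $B$, the Carath\'eodory extension sends at least two points of $\partial\bbD$ to $p$, and the image of the chord between them is a Jordan curve $\gamma\subset C\cup\{p\}$; since $M\cap(\gamma\setminus\{p\})=\emptyset$ while both sides of $\gamma$ meet $B\subset M$, the curve $\gamma$ separates $M\setminus\{p\}$, making $p$ a cut point of $M$.
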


We finally recall a {\em Plane Separation Theorem} \cite[p.120, Exercise 2]{Whyburn79}.
\begin{prop}\label{theo:sep} If $A$ is a continuum and $B$ is a closed connected set of the plane with $A\cap B=T$ being a totally disconnected set, and with $A\setminus T$ and $B\setminus T$ being connected, then there exists a simple closed curve $J$ separating $A\setminus T$ and $B\setminus T$ such that $J\cap (A\cup B)\subset A\cap B=T$.
\end{prop}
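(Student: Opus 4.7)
The strategy is to construct $J$ as a Jordan curve that contains $T$ in its entirety and whose complementary arcs lie in $\bbR^2\setminus(A\cup B)$, built as the limit of polygonal approximations obtained from the $0$-dimensional structure of $T$.

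First I would record that $T\subset J$ is in fact forced on any such separating curve. Disregarding the trivial case in which $A$ or $B$ is a single point, one has $\overline{A\setminus T}=A$ and $\overline{B\setminus T}=B$, because for every $t\in T$ and every plane neighborhood $U$ of $t$ the component of $A\cap U$ containing $t$ is a nondegenerate continuum, hence cannot lie entirely in the totally disconnected set $T$. Consequently every $t\in T$ is a limit of both $A\setminus T$ and $B\setminus T$, so $t$ cannot belong to an open complementary component of any separating simple closed curve. This reduces the problem to producing a simple closed curve through $T$ whose gaps are filled by arcs in $\bbR^2\setminus(A\cup B)$.

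To carry this out I would approximate $T$ from the outside. For each $n\ge 1$, partition the compact $0$-dimensional $T$ into finitely many pairwise disjoint clopen pieces $T_1^{(n)},\dots,T_{k_n}^{(n)}$ of diameter less than $1/n$, and thicken each to a Jordan domain $D_i^{(n)}$ with pairwise disjoint closures, boundaries disjoint from $T$, and diameters bounded by a fixed multiple of $1/n$. On the complement of $\bigcup_i D_i^{(n)}$ the continua $A$ and $B$ reduce to disjoint compacta, so standard plane topology (Whyburn~\cite{Whyburn79}) furnishes finitely many pairwise disjoint polygonal arcs in $\bbR^2\setminus(A\cup B)$ linking prescribed endpoints on the $\partial D_i^{(n)}$. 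Inside each $D_i^{(n)}$ one then constructs a crossing arc meeting $A\cup B$ only in $T\cap D_i^{(n)}$ by applying the same local separation set-up to the compact pair $\bigl(A\cap\overline{D_i^{(n)}},\, B\cap\overline{D_i^{(n)}}\bigr)$, and the concatenation of outside arcs and crossing arcs yields a polygonal simple closed curve $J_n$ that meets $A\cup B$ only at finitely many points of $T$.

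The main obstacle is the passage from these finite approximations $J_n$ to a genuine Jordan curve $J$ containing all of the (typically uncountable) set $T$. The remedy is a nested refinement: at stage $n+1$ the partition of $T$ is chosen to refine that of stage $n$, with $D_j^{(n+1)}\subset D_i^{(n)}$ whenever $T_j^{(n+1)}\subset T_i^{(n)}$, and the pieces of $J_{n+1}$ lying outside $\bigcup_j D_j^{(n+1)}$ are chosen to coincide with the corresponding pieces of $J_n$. Since the diameters of the $D_i^{(n)}$ shrink to $0$, each crossing arc collapses to a single point of $T$ in the limit, and the $J_n$ converge in the Hausdorff metric to a continuum $J\supset T$ whose part outside $\bigcup_i D_i^{(n)}$ stabilizes at stage $n$. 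The bookkeeping of the nesting forces $J$ to be a simple closed curve rather than a more degenerate continuum, and the Jordan Curve Theorem together with the containment $J\setminus T\subset\bbR^2\setminus(A\cup B)$ built into the construction yields the desired separation of $A\setminus T$ from $B\setminus T$.
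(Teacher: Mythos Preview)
The paper does not prove this proposition; it is quoted from Whyburn~\cite[p.120, Exercise 2]{Whyburn79}. However, Whyburn's technique is visible in the paper's proof of Proposition~\ref{key} (see the footnote there), and it differs substantially from yours. Rather than building $J$ as a Hausdorff limit of approximating curves passing through $T$, one thickens $A$ to a locally connected continuum $M$: with $N_j=\{z\in A:3^{-j}\le{\rm dist}(z,B)\le 3^{-j+1}\}$, cover each compact $N_j$ by finitely many closed disks of radius $3^{-j-1}$ centred in $N_j$, and let $M$ be the closure of the union of all these disks. Then $M\supset A$ is locally connected, $\partial M\cap(A\cup B)\subset T$, and one extracts $J$ from the boundary of a complementary component of $M$ via Torhorst's theorem (Lemma~\ref{torhorst}). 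No limiting process on curves is needed.

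Your scheme has two genuine gaps. First, the construction of the ``crossing arcs'' inside each $D_i^{(n)}$ is circular: you appeal to ``the same local separation set-up'' applied to $\bigl(A\cap\overline{D_i^{(n)}},\,B\cap\overline{D_i^{(n)}}\bigr)$, but these sets are in general disconnected, the hypotheses of the proposition fail locally, and producing an arc through $D_i^{(n)}$ that meets $A\cup B$ only in finitely many points of $T$ is exactly the difficulty you are trying to overcome. Second, the passage to the limit is not justified: Hausdorff limits of simple closed curves need not be simple closed curves (they can be dendrites, Peano continua, or worse), and ``the bookkeeping of the nesting forces $J$ to be a simple closed curve'' is an assertion, not an argument. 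Your opening observation that necessarily $T\subset J$ is correct and pleasant, but the disk-thickening plus Torhorst route bypasses both of the obstacles above entirely.
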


\section{Fundamental properties of fibers}\label{proof-1}

The proof for Theorem \ref{main-1} has two parts. We start from the equality $E_x^*=F_x^*$.

\begin{theo}\label{E=F}
Let $X\subset\bbC$ be a continuum. Then $E_x^*=F_x^*$ for every $x\in X$.
\end{theo}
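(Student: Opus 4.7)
The plan is to deduce $E_x^* = F_x^*$ from the fact that $E_x^*$ is a continuum containing $x$, since $F_x^*$ is by definition the component of $E_x^*$ through $x$. Membership $x \in E_x^*$ is immediate, and the case $x \in X^o$ is trivial, so I assume $x \in \partial X$. Two properties of $E_x^*$ must be established: it is closed in $\partial X$, and it is connected.

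For closedness, I would take a sequence $y_n \to y$ with $y_n \in E_x^*$ and $y \in \partial X$, and argue by contradiction. If $y \notin E_x^*$, some finite $C \subset \partial X$ (avoiding $x$ and $y$) yields a separation $\partial X \setminus C = A \cup B$ with $x \in A$, $y \in B$. Since $C$ is finite, hence closed in $\partial X$, the condition $\overline{A} \cap B = \emptyset$ makes $B$ relatively open in $\partial X$, so $y_n \in B$ and $y_n \notin C$ for all sufficiently large $n$. The same pair $(A,B)$ then separates $x$ from $y_n$, contradicting $y_n \in E_x^*$.

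For connectedness, I would argue by contradiction: suppose $E_x^* = A \sqcup B$ is a separation with $x \in A$ and $B \ne \emptyset$, and fix $y \in B$. The plan is to exhibit a finite set $C \subset \partial X$ separating $x$ from $y$ in $\partial X \setminus C$, directly contradicting $y \in E_x^*$. I will take $C = \gamma \cap \partial X$ for a simple closed curve $\gamma \subset \bbC$ that separates $x$ from $y$ in $\bbC$ and meets $\partial X$ in only finitely many points. Given such $\gamma$, the two components $U, V$ of $\bbC \setminus \gamma$ (with $x \in U$, $y \in V$) intersect $\partial X$ in two disjoint open sets whose union is $\partial X \setminus C$, furnishing the required separation.

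The main obstacle, and the only nontrivial step, is producing such a $\gamma$. My plan is to exploit that $\partial X$ is a compact, nowhere dense subset of $\bbC$ (it is the boundary of a closed set). I will take $\gamma = \partial B(c,r)$ to be a small circle with center $c$ near $x$ and radius $r < |x-y|/2$, chosen in general position with respect to $\partial X$. In the favourable case when $\partial X$ has locally finite $1$-dimensional Hausdorff measure near $x$, a coarea/Fubini argument applied to the Lipschitz map $z \mapsto |z-c|$ shows that almost every radius $r$ gives $\partial B(c,r) \cap \partial X$ finite; in more pathological cases, I will replace the circle by a polygonal Jordan curve whose straight edges, after a generic small translation and rotation, meet $\partial X$ in only finitely many points (possibly invoking the Plane Separation Theorem, Proposition~\ref{theo:sep}, applied to suitable continua enlarging $A$ and isolating $y$). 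Once $\gamma$ is in hand, $C = \gamma \cap \partial X$ is finite and the construction above yields the contradiction.
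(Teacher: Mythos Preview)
Your closedness argument is fine, but the connectedness argument has a genuine gap: the construction of the curve $\gamma$ is not justified, and in fact no such curve need exist by the method you describe. Notice that your construction of $\gamma$ never uses the separation $E_x^*=A\sqcup B$; you simply look for a small circle (or polygon) around $x$ meeting $\partial X$ in a finite set. If that worked, it would prove that \emph{every} $y\ne x$ in $\partial X$ can be separated from $x$ by a finite $C\subset\partial X$, i.e.\ that $E_x^*=\{x\}$ always --- which is false. Concretely, take the Cantor comb $X=(\Kc\times[0,1])\cup([0,1]\times\{1\})$ from Example~\ref{cantor-comb}; for $x=(0,\tfrac12)$ every circle $\partial B(c,r)$ with $c$ near $x$ and small $r$ meets $\partial X=X$ in a set containing a copy of the Cantor set, and the same obstruction defeats any polygonal curve in generic position. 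The coarea heuristic fails because $\partial X$ need not have $\sigma$-finite $1$-dimensional Hausdorff measure, and the vague appeal to Proposition~\ref{theo:sep} does not help since you have not produced the required continua $A,B$ with totally disconnected intersection.

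The paper's proof avoids plane geometry entirely and instead exploits the separation $E_x^*=A\cup B$ directly. One thickens $A$ and $B$ to disjoint open sets $A^*,B^*\subset\bbC$; the compact set $K=X\setminus(A^*\cup B^*)$ is then disjoint from $E_x^*$, so each $z\in K$ admits a finite set $C_z\subset\partial X$ and a separation $X\setminus C_z=U_z\cup V_z$ with $x\in U_z$, $z\in V_z$. Compactness of $K$ yields a finite subcover $V_{z_1},\ldots,V_{z_n}$, and with $C=\bigcup C_{z_i}$, $U=\bigcap U_{z_i}$, $V=\bigcup V_{z_i}$ one gets $U\subset A^*\cup B^*$. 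Splitting $U\cup\{x'\}$ along $A^*$ and $B^*$ then produces a separation of $X\setminus C'$ (for a finite $C'$) with $x$ and $x'$ on opposite sides, contradicting $x'\in E_x^*$. The key idea you are missing is this compactness argument: the finite separating set is assembled from finitely many witnesses to $z\notin E_x^*$ for $z$ ranging over the ``gap'' $K$ between $A$ and $B$, not from a single geometric curve.
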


\begin{proof}

Suppose that $E_x^*\setminus F_x^*$ contains some point $x'$. Then we can fix a separation $E_x^*=A\cup B$ with $F_x^*\subset A$ and $x'\in B$. Since $E_x^*$ is a compact set, the distance ${\rm dist}(A,B):=\min\{|y-z|: y\in A, z\in B\}$ is positive. Let
\[ A^*=\left\{z\in\bbC: {\rm dist}(z, A)<\frac{1}{3}{\rm dist}(A,B)\right\}
\]
and
\[ B^*=\left\{z\in\bbC: {\rm dist}(z, B)<\frac{1}{3}{\rm dist}(A,B)\right\}.\]
Then $A^*$ and $B^*$ are disjoint open sets in the plane, hence $K=X\setminus(A^*\cup B^*)$ is a compact subset of $X$. As $E_x^*\cap K=\emptyset$, we may find for each $z\in K$ a finite set $C_z$ and a separation
\[X\setminus C_z=U_z\cup V_z\]
such that $x\in U_z, z\in V_z$. Here, we have $U_z=X\setminus(C_z\cup V_z)=X\setminus(C_z\cup \overline{V_z})$ and $V_z=X\setminus(C_z\cup U_z)=X\setminus(C_z\cup \overline{U_z})$; so both of them are open in $X$.
By flexibility of $z\in K$, we obtain an open cover $\{V_z: z\in K\}$ of $K$, which then has a finite subcover $\left\{V_{z_1},\ldots, V_{z_n}\right\}$.
Let
\[U=U_{z_1}\cap\cdots\cap U_{z_n},\ V=V_{z_1}\cup\cdots\cup V_{z_n}.\]
Then $U,V$ are disjoint sets open in $X$ such that $C:=X\setminus(U\cup V)$ is a subset of $C_{z_1}\cup\cdots\cup C_{z_n}$, hence it is also a finite set. Now, on the one hand, we have a separation $X\setminus C= U\cup V$ with $x\in U$ and $K\subset V$; on the other hand, from the equality $K=X\setminus(A^*\cup B^*)$ we can infer $U\subset (A^*\cup B^*)$. Combining this with the fact that $x'\in B^*$, we may check that $A':=(U\cup\{x'\})\cap A^*=U\cap A^*$ and $B':=(U\cup\{x'\})\cap B^*=(U\cap B^*)\cup\{x'\}\subset B^*$ are separated in $X$. Let $C'=C\setminus\{x'\}$. Since $A'\subset U$ and $V$ are also separated in $X$, we see that \[X\setminus C'=U\cup \{x'\}\cup V=(U\cap A^*)\cup(U\cap B^*)\cup\{x'\}\cup V=A'\cup(B'\cup V)\]
is a separation with $x\in A'$ and $x'\in(B'\cup V)$.  This contradicts the assumption that $x'\in E_x^*$, because $E_x^*$ being the pseudo-fiber at $x$, none of its points can be separated from $x$ by the finite set $C'$.
\end{proof}

Then we recover in fuller generality  that triviality of the fiber at a point $x$ in a continuum $M\subset \bbC$ implies local connectedness of $M$ at $x$. More restricted versions of this result appear earlier: in \cite{Schleicher99a} for  continua in the plane with connected complement, in \cite{Kiwi04} for Julia sets of monic polynomials or the components of such a set, and in \cite{JolivetLoridantLuo0000} for continua in the plane whose complement has finitely many components.

\begin{theo}\label{trivial-fiber}
If $F_x^*=\{x\}$ for a point $x$ in a continuum $X\subset\bbC$ then $X$ is locally connected at $x$.
\end{theo}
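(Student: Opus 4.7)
If $x\in X^o$ the claim is immediate, for $X$ then contains a planar disk about $x$ and is locally connected there, with $F_x^*=\{x\}$ holding by definition. So I reduce to the case $x\in\partial X$ and argue the contrapositive: assuming $X$ is not locally connected at $x$, I will produce a nondegenerate subcontinuum of $E_x^*=F_x^*$ passing through $x$, contradicting $F_x^*=\{x\}$.

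\textbf{Step 1 (convergence continuum at $x$).} By Lemma~\ref{local-criterion}, failure of LC at $x$ yields $\varepsilon>0$ and points $y_n\to x$ lying in components $E_n$ of $\overline{B}(x,\varepsilon)\cap X$ distinct from the component $E$ of $x$. Boundary bumping forces each $E_n$ to meet $\partial B(x,\varepsilon)$, so $\diam E_n\ge\varepsilon-|x-y_n|$. Passing to a Hausdorff-convergent subsequence produces a nondegenerate continuum $K=\lim E_n$ in $\overline{B}(x,\varepsilon)\cap X$ with $x\in K$. Since $K$ is connected and $x\in K$, we have $K\subset E$, whence automatically $K\cap E_n=\emptyset$ for every $n$. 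If some $p\in K\cap X^o$ existed, a small disk about $p$ would lie in $F:=\overline{B}(x,\varepsilon)\cap X$; being connected and meeting $E$, that disk would be contained in $E$, yet Hausdorff convergence forces it to meet every $E_n$ for large $n$, contradicting $E\cap E_n=\emptyset$. Hence $K\subset\partial X$.

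\textbf{Step 2 (localizing the separation in the plane).} It suffices to show $K\subset E_x^*$, for then $K$ is a connected subset of $E_x^*=F_x^*$ through $x$, so $K\subset F_x^*=\{x\}$ is a contradiction. Fix $y\in K\setminus\{x\}$ and suppose, towards contradiction, that a finite set $C\subset\partial X$ realizes a separation $\partial X\setminus C=A\cup B$ with $x\in A$ and $y\in B$. For each $\eta>0$ let $U_\eta$ denote the open $\eta$-neighborhood of $C$ in $\bbC$; since $\overline{A}\cap B=A\cap\overline{B}=\emptyset$, the compact sets $A\setminus U_\eta$ and $B\setminus U_\eta$ are at positive distance, so I can enclose them in disjoint open neighborhoods $V_A,V_B\subset\bbC$. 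For $n$ large, Hausdorff convergence supplies $a_n,b_n\in E_n$ with $a_n\to x\in V_A$ and $b_n\to y\in V_B$, and connectedness of $E_n$ produces $p_n\in E_n\setminus(V_A\cup V_B)$. Because $p_n\in X$ lies within Hausdorff distance $o(1)$ of $K\subset\partial X$ and is bounded away from $A\setminus U_\eta$ and $B\setminus U_\eta$, a short estimate confines $p_n$ to within $O(\eta)$ of $C$; letting $\eta\to 0$ along a diagonal subsequence yields a cluster point of the $p_n$ in $K\cap C$.

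\textbf{Step 3 (the main obstacle).} The above only shows that the separating set $C$ must touch $K$, which is \emph{not} yet a contradiction---after all, a connected continuum can well be disconnected by removing a finite set of points. The genuine difficulty, which I regard as the main obstacle, is to upgrade the $\partial X$-separation into an honest planar one: construct a simple closed curve $\gamma\subset\bbC$ with $\gamma\cap\partial X\subset C$ (a good cut in the sense of Remark~\ref{modified}), arranged moreover to avoid $X^o$, such that $x$ lies inside $\gamma$ and $y$ outside. Given such a $\gamma$ one has $\gamma\cap X\subset C$ finite; each $E_n$ is a connected subset of $X$ joining $a_n$ (inside $\gamma$) to $b_n$ (outside $\gamma$), hence meets $\gamma\cap X\subset C$, and the pigeonhole principle then forces two of the pairwise disjoint $E_n$ to share a crossing point---a contradiction. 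The construction of $\gamma$ is a piece of planar surgery at each $c\in C$, routing $\gamma$ through $\bbC\setminus X$ except at finitely many crossings in $C$; it rests on Proposition~\ref{theo:sep} applied locally around each $c\in C$ and on the Torhorst theorem (Lemma~\ref{torhorst}) to tame the boundaries of the complementary components of $X$. This planar construction is essentially the forward direction of Theorem~\ref{criterion} and constitutes the analytic heart of the argument.
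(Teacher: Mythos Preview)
Your Step~1 is correct and parallels the paper exactly. But from there the paper finishes in one sentence while you embark on a long, incomplete detour.

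Here is what you are missing. Once you have pairwise disjoint continua $E_n\to K$ with $x,y\in K$, the inclusion $K\subset E_x^*$ follows immediately: given any finite $C$, all but finitely many $E_n$ miss $C$; each such $E_n$ is then a connected subset of $X\setminus C$. If $X\setminus C=A\cup B$ were a separation with $x\in A$ and $y\in B$, then for large $n$ the connected set $E_n$ would contain points in the relatively open set $A$ (near $x$) and in the relatively open set $B$ (near $y$), contradicting that $E_n$ lies in a single piece. This is exactly the argument behind the paper's terse ``every point $y\in M\setminus\{x\}$ cannot be separated from $x$ by a finite set in $X$.'' The paper's proofs throughout---see the proof of Theorem~\ref{E=F} and the hypothesis of Proposition~\ref{key}, as framed in Proposition~\ref{seppoints}---work with separations of $X\setminus C$. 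No good cuts, no Torhorst, no planar surgery are needed here; your extra verification that $K\subset\partial X$ is correct but unnecessary.

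As for your Steps~2--3: Step~2 only shows $C\cap K\ne\emptyset$, which you yourself acknowledge is not a contradiction. Step~3 then promises a simple closed curve $\gamma$ with $\gamma\cap X^o=\emptyset$ and $\gamma\cap\partial X\subset C$, but you do not construct it; you merely gesture at Proposition~\ref{theo:sep} and the Torhorst theorem and call it ``routine.'' That construction is precisely the substantive content of Proposition~\ref{key}, which the paper proves in Section~\ref{proof-criterion}, \emph{after} the present theorem---so you are invoking a forward reference for the very step you label ``the analytic heart of the argument.'' Worse, Proposition~\ref{key} takes as input a separation of $X\setminus C$, not of $\partial X\setminus C$; so even granting the forward reference, it does not close the gap in the form you have set up. And without the requirement $\gamma\cap X^o=\emptyset$, your pigeonhole argument fails, since the $E_n$ could cross $\gamma$ along arcs in $X^o$ rather than at the finitely many points of $\gamma\cap\partial X$.
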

\begin{proof}
We will prove that if $X$ is not locally connected at $x$ then $F_x^*$ contains a non-degenerate continuum $M\subset X$.

By definition, if $X$ is not locally connected at $x$ there exists a number $r>0$ such that the component $Q_x$ of $B(x,r)\cap X$ containing $x$ is not a neighborhood of $x$ in $X$. Here
\[B(x,r)=\{y: |x-y|\le r\}.\]
This means that there exist a sequence of points $\{x_k\}_{k=1}^\infty\subset X\setminus Q_x$ such that $\lim\limits_{k\rightarrow\infty}x_k=x$. Let $Q_k$ be the component of $B(x,r)\cap X$ containing $x_k$. Then $Q_i\cap\{x_k\}_{k=1}^\infty$ is a finite set for each $i\ge1$, and hence we may assume, by taking a subsequence, that $Q_i\cap Q_j=\emptyset$ for $i\ne j$.

Since the hyperspace of the nonempty compact subsets of $X$ is a compact metric space under Hausdorff metric, we may further assume that there exists a continuum $M$ such that $\lim\limits_{k\rightarrow\infty}Q_k=M$ under Hausdorff distance. Clearly, we have $x\in M\subset Q_x$. The following Lemma \ref{reach-boundary} implies that the diameter of $M$ is at least $r$. Since every point $y\in M\setminus\{x\}$ cannot be separated from $x$ by a finite set in $X$, $F_x^*$ cannot be trivial and our proof is readily completed.
\end{proof}

\begin{lemm}\label{reach-boundary}
In the proof for Theorem \ref{trivial-fiber}, every component of $B(x,r)\cap X$ intersects $\partial B(x,r)$. In particular, $Q_k\cap\partial B(x,r)\ne\emptyset$ for all $k\ge1$.
\end{lemm}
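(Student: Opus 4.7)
The plan is to apply a standard ``boundary bumping'' argument from continuum theory. Let $Q$ be any component of $K:=B(x,r)\cap X$; the goal is to show that $Q$ must meet the sphere $\partial B(x,r)=\{y:|y-x|=r\}$, and then the ``in particular'' statement about $Q_k$ is immediate.

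First I would record the preliminary observation that $X\not\subset B(x,r)$: otherwise $K=X$ would be connected, so $Q_x$ would equal $X$ and therefore be a neighborhood of $x$ in $X$, contradicting the choice of $r$ in the proof of Theorem~\ref{trivial-fiber}. Since $X$ is connected and meets both the open set $\{y:|y-x|<r\}$ (which contains $x$) and the open set $\bbC\setminus B(x,r)$ (nonempty by what we just noted), $X$ must intersect the sphere $\partial B(x,r)$, for otherwise those two disjoint opens would cover and hence separate $X$.

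Now suppose, for contradiction, that some component $Q$ of $K$ satisfies $Q\cap\partial B(x,r)=\emptyset$, so $Q\subset\{y:|y-x|<r\}\cap X$. Since $K$ is a compact metric space, the components of $K$ coincide with its quasi-components. Therefore $Q$ can be separated from the nonempty compact set $\partial B(x,r)\cap X$ by a clopen subset $U$ of $K$ with $Q\subset U$ and $U\cap\partial B(x,r)=\emptyset$. Then $U$ is closed in $X$, as a closed subset of the closed set $K\subset X$. It is also open in $X$: writing $U=V\cap K$ for some open $V\subset\bbC$, the inclusion $U\subset\{y:|y-x|<r\}$ yields $U=V\cap\{y:|y-x|<r\}\cap X$, which is open in $X$. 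Thus $U$ is a nonempty proper clopen subset of $X$ (proper because $\partial B(x,r)\cap X$ is nonempty and disjoint from $U$), contradicting the connectedness of $X$.

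No step should present a real obstacle; the argument is routine once the clopen-separation property of components in a compact metric space is recalled. The only subtlety worth flagging is the initial reduction to the case $X\cap\partial B(x,r)\neq\emptyset$, which genuinely uses the way $r$ is chosen in the proof of Theorem~\ref{trivial-fiber}.
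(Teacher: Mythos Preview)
Your proof is correct and follows essentially the same route as the paper's: both separate the offending component $Q$ from $X\cap\partial B(x,r)$ by a relatively clopen subset of $K=X\cap B(x,r)$, using that components coincide with quasi-components in a compact metric space, and then derive a disconnection of $X$. The paper carries out the compactness/finite-subcover step explicitly while you invoke it in one sentence; conversely, you make the preliminary check $X\cap\partial B(x,r)\neq\emptyset$ explicit, which the paper leaves implicit.
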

\begin{proof}
Otherwise, there would exist a component $Q$ of  $B(x,r)\cap X$ such that $Q\cap\partial B(x,r)=\emptyset$. Then, for each point $y$ on $X\cap\partial B(x,r)$, the component $Q_y$ of $X\cap B(x,r)$ containing $y$ is disjoint from $Q$. By definition of quasi-components, we may choose a separation $X\cap B(x,r)=U_y\cup V_y$ with $Q_y\subset U_y$ and $Q\subset V_y$. Since every $U_y$ is open in $X\cap B(x,r)$, we have an open cover
\[\left\{U_y: y\in X\cap\partial B(x,r)\right\}\]
for $X\cap\partial B(x,r)$, which necessarily has a finite subcover, say $\left\{U_{y_1},\ldots, U_{y_t}\right\}$. Let
\[U=U_{y_1}\cup\cdots\cup U_{y_t},\quad V=V_{y_1}\cap\cdots\cap V_{y_t}.\]
Then $X\cap B(x,r)=U\cup V$ is a separation with $\partial B(x,r)\subset U$. Therefore,
\[X=[(X\setminus B(x,r))\cup U]\cup V\]
is a separation, which contradicts the connectedness of $X$.
\end{proof}

\section{Schleicher's and Kiwi's approaches unified}\label{proof-criterion}

Let $X$ be a topological space and $x_0$ a point in $X$. The component of $X$ containing $x_0$ is the maximal connected set $P\subset X$ with $x_0\in P$. The quasi-component of $X$ containing $x_0$ is defined to be the set
\[Q=\{y\in X: \ \text{no\ separation}\ X=A\cup B\ \text{exists\ such\ that}\ x\in A, y\in B\}.\]
Equivalently, the quasi-component of a point $p\in X$ may be defined as the intersection of all closed-open subsets of $X$ containing $p$. Since any component is contained in a quasi-component, and since quasi-components coincide with the components whenever $X$ is compact \cite{Kuratowski68}, we can infer an equivalent definition of pseudo fiber as follows.

\begin{prop}\label{seppoints} Let $X\subset\bbC$ be a continuum. Two points of $X$ are separated by a finite set $C\subset X$ iff they belong to distinct quasi-components of $X\setminus C$.
\end{prop}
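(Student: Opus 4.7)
The plan is to unwind both sides of the equivalence to a common characterization in terms of clopen subsets of the subspace $Y := X \setminus C$. The key general fact I will invoke is that the quasi-component of a point $p \in Y$ equals the intersection of all clopen subsets of $Y$ containing $p$; consequently, two points of $Y$ lie in distinct quasi-components iff some clopen subset of $Y$ contains one of them but not the other.

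First I would verify the elementary bridge between the paper's notion of separation (phrased with closures in the ambient space) and clopenness in the subspace. Suppose $X \setminus C = A \cup B$ satisfies $\overline{A} \cap B = A \cap \overline{B} = \emptyset$. Since the closure of $A$ inside $Y$ is $\overline{A} \cap Y$, and this equals $A$ by the first disjointness condition, $A$ is closed in $Y$; by symmetry, so is $B$. As $\{A, B\}$ partitions $Y$, both are clopen. Conversely, given any clopen $A \subseteq Y$ with complement $B = Y \setminus A$, any point of $\overline{A}$ that also lies in $Y$ must belong to $\overline{A} \cap Y = A$ and therefore not to $B$, so $\overline{A} \cap B = \emptyset$; the symmetric identity $A \cap \overline{B} = \emptyset$ follows in the same way.

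Combining these two ingredients finishes the proof: $x$ and $y$ are separated by $C$ iff $Y = X \setminus C$ admits a separation $A \cup B$ with $x \in A$ and $y \in B$, iff some clopen subset of $Y$ contains $x$ but not $y$, iff $x$ and $y$ belong to distinct quasi-components of $Y$. The only delicate point is the interplay between closures taken in $X$ and closures taken in the subspace $Y$, which is handled by the identity $\overline{A}^{\,Y} = \overline{A} \cap Y$; beyond this, no substantive obstacle arises, and the finiteness of $C$ is not used at all in the argument.
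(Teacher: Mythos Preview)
Your argument is correct and matches the paper's approach: the paper does not give a separate proof but treats the proposition as immediate from the definition of quasi-component it records just above (namely, $y$ lies in the quasi-component of $x_0$ iff no separation $X=A\cup B$ exists with $x_0\in A$, $y\in B$). Your contribution is to spell out carefully why the paper's notion of separation, which uses closures in the ambient space $X$, coincides with a clopen partition of the subspace $Y=X\setminus C$; this is exactly the point the paper leaves implicit, and your observation that finiteness of $C$ plays no role is also accurate.
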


The following proposition implies Theorem~\ref{criterion}. We present it in this form, since it can be seen as a modification of Whyburn's plane separation theorem (Proposition~\ref{theo:sep}). Actually, the main idea of our proof is borrowed from~\cite[p.126]{Whyburn79} and is slightly adjusted.

\begin{prop}\label{key} Let $C$ be a finite subset of a continuum $X\subset\bbC$ and $x,y$ two  points on $X\setminus C$. If there is a separation  $X\setminus C=P\cup Q$ with $x\in P$ and $y\in Q$ then  $x$ is separated from $y$ by a simple closed curve $\gamma$ with $(\gamma\cap X)\subset C$.
\end{prop}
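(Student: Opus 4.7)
The strategy is to reduce to Whyburn's Plane Separation Theorem (Proposition~\ref{theo:sep}). I would build two compact sets $A, B\subset\bbC$ with the following properties: $A$ and $B$ are continua, $T:=A\cap B$ is a finite subset of $C$ (so totally disconnected), $A\setminus T$ and $B\setminus T$ are connected, $x\in A\setminus T$, $y\in B\setminus T$, and $A\cup B\supset X$. Once such $A,B$ are in hand, Proposition~\ref{theo:sep} yields a simple closed curve $\gamma$ separating $A\setminus T$ from $B\setminus T$ with $\gamma\cap(A\cup B)\subset T$. In particular $\gamma$ separates $x$ from $y$, and $\gamma\cap X\subset\gamma\cap(A\cup B)\subset T\subset C$, as required.

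The natural first choice is $A_0:=\overline{P}^X$ and $B_0:=\overline{Q}^X$. Two routine verifications come first: (i) $A_0\cap B_0\subset C$, because $\overline{P}\cap Q=P\cap\overline{Q}=\emptyset$ by definition of separation and $X$ is closed in $\bbC$, so closures in $X$ coincide with closures in $\bbC$; (ii) $A_0\cup B_0=X$, because $X$ is a connected set with more than one point, hence no $c\in C$ is isolated in $X$, so every sufficiently small neighborhood of $c$ meets $X\setminus C=P\cup Q$, forcing $c\in\overline{P}\cup\overline{Q}$. What fails in general is the connectedness of $A_0,B_0$ and of $A_0\setminus T,\,B_0\setminus T$.

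To repair these failures I would perform a localized modification at each $c\in C$. Choose pairwise disjoint small closed disks $D_c$ so that the closures of $P\cap D_c$ and $Q\cap D_c$ in $D_c$ meet only at $c$; this is possible because $\overline{P}\cap\overline{Q}\subset C$ is finite. Inside each $D_c$, an application of Proposition~\ref{theo:sep} to appropriate subcontinua of $\overline{P}\cap D_c$ and $\overline{Q}\cap D_c$ through $c$ produces a short arc $\alpha_c\subset D_c$ passing through $c$, with $\alpha_c\cap X=\{c\}$, dividing $D_c$ into two half-disks, one containing the local portion of $P$ and the other the local portion of $Q$. Outside $\bigcup_c D_c$, the sets $\overline{P}\setminus\bigcup_c D_c$ and $\overline{Q}\setminus\bigcup_c D_c$ are compact and disjoint, so standard planar topology lets me join the various components of $\overline{P}$ (resp.\ $\overline{Q}$) by arcs in $\bbC\setminus X$. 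Adding these arcs to $A_0,B_0$ yields continua $A,B$ with the desired properties, and Proposition~\ref{theo:sep} then delivers the required $\gamma$.

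The main obstacle is the local step: exhibiting an arc $\alpha_c$ through $c$ that stays in $D_c$, meets $X$ only at $c$, and truly separates the local parts of $P$ and $Q$ in $D_c$. This is where the argument of \cite[p.126]{Whyburn79} is borrowed and slightly adjusted, by applying the Plane Separation Theorem to the right pair of local subcontinua rather than to the global sets. A second, more subtle issue is to assemble the pieces globally so that, after inserting the connecting arcs in $\bbC\setminus X$ outside $\bigcup_c D_c$, both $A\setminus T$ and $B\setminus T$ remain connected while $A\cap B$ stays inside $C$; once this bookkeeping is carried out, the conclusion is immediate from Proposition~\ref{theo:sep}.
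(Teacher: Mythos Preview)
Your approach is genuinely different from the paper's, but it cannot work as stated. Because you take $A\supset A_0=\overline{P}$ and $B\supset B_0=\overline{Q}$, the curve produced by Proposition~\ref{theo:sep} would separate \emph{all} of $P$ from \emph{all} of $Q$, not merely $x$ from $y$; and such a curve need not exist. Let $X=\big([-1,1]\times\{0\}\big)\cup\big(\{0\}\times[-1,1]\big)$, set $C=\{0\}$, and let $P$ consist of the two horizontal open arms and $Q$ of the two vertical ones; this is a valid separation of $X\setminus C$. Any simple closed curve $\gamma$ with $\gamma\cap X\subset\{0\}$ that separates two points of $X$ must pass through $0$; near $0$ it is a Jordan arc whose two germs lie in the four open quadrants, so each Jordan domain of $\bbC\setminus\gamma$ contains a \emph{consecutive} block of arms around $0$. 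Since the two $P$-arms are opposite (and likewise the two $Q$-arms), no such $\gamma$ can separate $P$ from $Q$. The very same obstruction defeats your local step: there is no arc $\alpha_0\subset D_0$ through $0$ with $\alpha_0\cap X=\{0\}$ putting both horizontal germs on one side and both vertical germs on the other. Thus neither the ``half-disk'' construction nor the global pair $(A,B)$ you describe can exist in this example.

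The paper avoids this by never trying to separate $P$ from $Q$. It isolates the component $P_1$ of $\overline{P}$ containing $x$, writes $P_1^*$ for the union of all remaining components of $\overline{P}$ and $\overline{Q}$, and then \emph{thickens} $P_1$ by closed disks whose radii tend to zero as one approaches $P_1\cap P_1^*\subset C$. The resulting set $M$ is a locally connected continuum with $x\in M^o$, $y\notin M$, and $\partial M\cap X\subset C$; two applications of the Torhorst theorem (Lemma~\ref{torhorst}) to complementary components then extract the desired simple closed curve. In the cross example $M$ is a thickened horizontal segment pinched at $0$, so $M^o$ has two components, and the curve produced is the boundary of just the one containing $x$: it separates $x$ from $y$ while leaving the other $P$-arm on the $y$-side --- exactly the flexibility your setup forbids.
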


\begin{proof}
We first note that every component of $\overline{P}$ intersects $C$. If on the contrary a component $W$ of $\overline{P}\subset(P\cup C)$ is disjoint from $C$, then $\overline{P}$ is disconnected and a contradiction follows. Indeed, we have $\overline{P}\ne W$ since $\overline{P}\cap C\ne \emptyset$. And all the components of $\overline{P}$ intersecting $C$ are disjoint from $W$. As $C$ is a finite set, there are finitely many such components, say $W_1, \ldots, W_t$. However, since a quasi-component of a compact metric space is just a component, we can find separations $\overline{P}=A_i\cup B_i$ for $1\le i\le t$ such that $W\subset A_i, W_i\subset B_i$. Let $A=\cap_iA_i$ and $B=\cup_i B_i$. Then $\overline{P}=A\cup B$ is a separation with $A\cap Q=\emptyset$, hence $X= A\cup(B\cup Q)$ is a separation of $X$. This contradicts the connectedness of $X$.

Since every component of $\overline{P}$ intersects $C$ and since $C$ is a finite set, we know that $\overline{P}$ has finitely many components, say $P_1,\ldots, P_k$. We may assume that $x\in P_1$. Similarly, every component of $\overline{Q}\subset(Q\cup C)$ intersects $C$ and $\overline{Q}$ has finitely many components, say $Q_1,\ldots, Q_l$. We may assume that $y\in Q_1$.

Let $P_1^*=P_2\cup\cdots\cup P_k\cup Q_1\cup\cdots\cup Q_l$. Then $X=P_1\cup P_1^*$, $x\in P_1$, $y\in P_1^*$ and $(P_1\cap P_1^*)\subset C$.
Let $N_1=\{z\in P_1;\ {\rm dist}(z,P_1^*)\ge1\}$ and for each $j\ge2$, let \footnote{This idea is inspired from the proof of Whyburn's plane separation theorem, see Proposition~\ref{theo:sep}}
\[N_j=\{z\in P_1:\ 3^{-j}\le {\rm dist}(z,P_1^*)\le 3^{-j+1}\}.\]
Clearly, every $N_j$ is a compact set. Therefore, we may cover $N_j$ by finitely many open disks centered at a point in $N_j$ and with radius $r_j=3^{-j-1}$, say $B(x_{j1},r_j),\ldots, B(x_{jk(j)},r_j)$.

For $j\geq 1$, let us set $M_j=\bigcup_{i=1}^{k(j)}\overline{B(x_{ji},r_j)}$. Then
$M=\overline{\bigcup_{j\geq 1}M_j}$ is a compact set containing $P_1$. Its interior $M^o$ contains $x$. Moreover, $P_1^*\cap \left(\bigcup_{j\geq 1}M_j\right)=\emptyset$ by definition of $N_j$ and $M_j$, while $M\setminus\left(\bigcup_{j}M_j\right)$ is a subset of $P_1\cap P_1^*$,  hence we have $M\cap P_1^*=P_1\cap P_1^*$ and  $y\notin M$. Also, $\partial M\cap X$ is a subset of $P_1\cap P_1^*$, hence it is a finite set.

Now $M$ is a continuum, since $P_1$ is itself a continuum and the disks $B(x_{ji},r_j)$ are centered at $x_{ji}\in N_j$. The continuum $M$ is even locally connected at every point on $M\setminus C=\bigcup_{j}M_j$. Indeed, it is locally a finite union of disks, since $M_j\cap M_k=\emptyset$ as soon as $|j-k|>1$ and since every point of $M\setminus C$ is in one of these disks. As $C$ is finite, it follows from Lemma~\ref{non-lc} that $M$ is a locally connected continuum.

Now, let $U$ be the component of $\bbC\setminus M$ that contains $y$.  By Torhorst Theorem, see Lemma \ref{torhorst}, the boundary $\partial U$ of $U$ is a locally connected continuum. Therefore, by Lemma~\ref{fact-1}, the union $U\cup\partial U$  is also a locally connected continuum. Since $U$ is a complementary component of $\partial U$, the union $U\cup\partial U$ even has no cut point. It follows from Torhorst Theorem that the boundary $\partial V$ of any component $V$ of $\bbC\setminus(U\cup\partial U)$ is a simple closed curve. Note that this curve separates every point of $U$ from any point of $V$. Choosing $V$ to be the component of $\bbC\setminus(U\cup\partial U)$ containing $x$, we obtain a simple closed curve $J=\partial V$ separating $y$ from $x$.

Finally, since $J=\partial V\subset\partial U\subset\partial M$, we see that $J\cap X$ is contained in the finite set $C$. Consequently, $J$ is a good cut of $X$ separating $x$ from $y$.
\end{proof}

This result proves Theorem~\ref{criterion} and is related to Kiwi's characterization of fibers. Restricting to connected Julia sets $J(f)$ of polynomials $f$, Kiwi~\cite{Kiwi04} had defined for $\zeta\in J(f)$ the fiber $\textrm{Fiber}(\zeta)$ as the set of $\xi\in J(f)$ such that $\xi$ and $\zeta$ lie in the same connected component of $J(f)\setminus Z$ for every finite set $Z\subset J(f)$, made of periodic or preperiodic points that are not in the grand orbit of a Cremer point. Kiwi showed in~\cite[Corollary 2.18]{Kiwi04} that these fibers can be characterized by using separating curves involving external rays.

\section{A locally connected model for the continuum $X$}\label{proof-3}

In this section, we recall a few notions and results from Kelley's {\em General Topology}~\cite{Kelley55} and construct a proof for Theorem~\ref{main-3}, the results of which are divided into two parts:
\begin{itemize}
\item[(1)] $X/\!\sim$ is metrizable, hence is a compact connected metric space, {\em i.e.}, a continuum.
\item[(2)] $X/\!\sim$ is a locally connected continuum.
\end{itemize}

A decomposition $\Dc$ of a topological space $X$ is {\em upper semi-continuous} if for each $D\in\Dc$ and each open set $U$ containing $D$ there is an open set $V$ such that $D\subset V\subset U$ and $V$ is the union of members of $\Dc$ \cite[p.99]{Kelley55}. Given a decomposition $\Dc$, we may define a projection $\pi: X\rightarrow\Dc$ by setting $\pi(x)$ to be the unique member of $\Dc$ that contains $x$. Then, the quotient space $\Dc$ is equipped with the largest topology such that $\pi: X\rightarrow \Dc$ is continuous. We copy the result of \cite[p.148, Theorem 20]{Kelley55} as follows.

\begin{theo}\label{kelley-p148-20}
Let $X$ be a topological space, let $\Dc$ be an upper semi-continuous decomposition of $X$ whose members are compact, and let $\Dc$ have the quotient topology. Then $\Dc$ is, respectively, Hausdorff, regular, locally compact, or has a countable base, provided $X$ has the corresponding property.
\end{theo}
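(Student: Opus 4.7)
The plan is to verify the four preservation properties by exploiting a single structural consequence of the hypotheses: for every open $U \subset X$ containing a member $D \in \Dc$, upper semi-continuity yields a saturated open set $V$ (that is, a union of members of $\Dc$) with $D \subset V \subset U$. This is essentially a restatement of the definition but will be the workhorse throughout; it also implies that the projection $\pi: X \to \Dc$ is a closed map.

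For Hausdorffness, given distinct $D_1, D_2 \in \Dc$, the standard argument for disjoint compact subsets of a Hausdorff space produces disjoint open neighborhoods $U_1, U_2$ in $X$; shrinking each $U_i$ to a saturated open $V_i$ as above, the images $\pi(V_1), \pi(V_2)$ are disjoint open neighborhoods of $D_1, D_2$ in $\Dc$. For regularity, given $D \in \Dc$ and a closed $G \subset \Dc$ with $D \notin G$, set $F = \pi^{-1}(G)$, which is closed in $X$ and disjoint from the compact set $D$. Applying regularity of $X$ pointwise on $D$ and reducing by compactness yields disjoint open sets $U \supset D$ and $W \supset F$. Shrink $U$ to a saturated $U'$ with $D \subset U' \subset U$, and saturate $W$ member by member: for each $D' \in G$ pick a saturated open $W_{D'} \subset W$ containing $D'$, then let $W' = \bigcup_{D' \in G} W_{D'}$. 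Both $U'$ and $W'$ are saturated and open, and $U' \cap W' = \emptyset$, so $\pi(U')$ and $\pi(W')$ are disjoint open sets in $\Dc$ separating $D$ from $G$.

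For local compactness, cover the compact $D$ by finitely many open subsets of $X$ with compact closure to obtain an open $U \supset D$ with $\overline{U}$ compact; shrink $U$ to a saturated $V$ with $D \subset V \subset U$. Then $\pi(V)$ is an open neighborhood of $D$, and $\overline{\pi(V)}$ is a closed subset of the compact set $\pi(\overline{V})$, hence itself compact. For second countability, let $\mathcal{B}$ be a countable base of $X$, and for each finite union $W$ of elements of $\mathcal{B}$ set $W^* = \{D \in \Dc : D \subset W\}$. Upper semi-continuity shows that $\pi^{-1}(W^*)$ is open: given $x \in \pi^{-1}(W^*)$, shrink $W$ to a saturated open neighborhood of $\pi(x)$ contained in $W$, which lies in $\pi^{-1}(W^*)$. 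Hence $W^*$ is open in $\Dc$. Given any open $G \subset \Dc$ and $D \in G$, cover the compact $D$ by finitely many elements of $\mathcal{B}$ lying in the open set $\pi^{-1}(G)$; their union $W$ satisfies $D \in W^* \subset G$, so the countable family $\{W^*\}$ forms a base of $\Dc$.

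The main obstacle I anticipate is the bookkeeping in the regularity step, where the neighborhoods of $D$ and of $F$ must both be saturated simultaneously. The trick is not to shrink a single open set globally but to saturate the neighborhood of $F$ member by member and take the union, relying on the fact that arbitrary unions of saturated open sets remain saturated and open. Once this use of upper semi-continuity is in hand, each of the four assertions follows the same pattern: separate or localize in $X$ using the corresponding property, saturate via upper semi-continuity, and push down to $\Dc$ under the (closed, continuous) projection.
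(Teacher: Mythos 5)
Your argument is correct: the paper gives no proof of this statement but quotes it verbatim from Kelley (\emph{General Topology}, p.~148, Theorem 20), and your saturate-and-project scheme --- separate or localize in $X$, shrink to saturated open sets via upper semi-continuity, push down along the closed continuous projection --- is essentially the textbook argument. The one point worth flagging is the local compactness step: if $X$ is not assumed Hausdorff, ``locally compact'' yields compact neighborhoods rather than open sets with compact closure, so it is cleaner to cover $D$ by finitely many compact neighborhoods with union $N$, shrink $N^o$ to a saturated open $V \supset D$, and take $\pi(N) \supset \pi(V)$ as the compact neighborhood of $D$ in $\Dc$; for the planar continua considered in this paper $X$ is compact metric, so the issue is vacuous.
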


Urysohn's metrization theorem \cite[p.125, Theorem 16]{Kelley55} states that a regular $T_1$-space whose topology has a countable base is metrizable. Combining this with Theorem \ref{kelley-p148-20}, we see that the first part of Theorem \ref{main-3} is implied by the following theorem, since $X$ is a continuum on the plane and has all the properties mentioned in Theorem \ref{kelley-p148-20}. One may also refer to \cite[p.40, Theorem 3.9]{Nadler92}, which states that any upper semi-continuous decomposition of a compact metric space is metrizable.

\begin{theo}\label{usc}
The decomposition $\{[x]_\sim:x\in X\}$ is upper semi-continuous.
\end{theo}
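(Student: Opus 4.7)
The plan is to verify upper semi-continuity directly from Definition~\ref{def:eqrel}. Each class $[x]_\sim$ is either the singleton $\{x\}$ (when $x \in X \setminus \overline{X_0}$) or the connected component of $\overline{X_0}$ that contains $x$ (when $x \in \overline{X_0}$), and in either case is a closed subset of the compact space $X$; these are precisely the hypotheses needed to then invoke Theorem~\ref{kelley-p148-20}. I fix a class $D = [x]_\sim$ together with an open $U \supset D$, and produce a saturated open $V$ with $D \subset V \subset U$.

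If $x \notin \overline{X_0}$, I simply take $V := U \setminus \overline{X_0}$. This set is open because $\overline{X_0}$ is closed, and it contains $x$. Every point of $V$ lies outside $\overline{X_0}$, so its class is a singleton contained in $V$; hence $V$ is saturated.

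The interesting case is $x \in \overline{X_0}$, where $D$ is the component $C$ of $\overline{X_0}$ containing $x$. Here I exploit that $\overline{X_0}$ is compact metric, so its components coincide with its quasi-components (as recalled just after Proposition~\ref{seppoints}). Thus $C$ equals the intersection of all clopen subsets of $\overline{X_0}$ containing $x$. A standard compactness argument, applied to the disjoint closed set $\overline{X_0} \setminus U$, then produces a single clopen $K \subset \overline{X_0}$ with $C \subset K \subset U \cap \overline{X_0}$. By normality of the compact metric space $X$, I choose disjoint open subsets $W_1, W_2$ of $X$ with $K \subset W_1$ and $\overline{X_0} \setminus K \subset W_2$, and set $V := W_1 \cap U$.

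It remains to verify that $V$ is saturated. If a class meeting $V$ is a singleton, it is trivially inside $V$. Otherwise the class is a component $C'$ of $\overline{X_0}$; any point of $C' \cap V$ lies in $W_1 \cap \overline{X_0}$, which is disjoint from $W_2 \supset \overline{X_0} \setminus K$ and therefore lies in $K$. Since $K$ is clopen in $\overline{X_0}$ and $C'$ is connected, it follows that $C' \subset K \subset V$. The only subtle step is precisely the use of a \emph{clopen} separator $K$ inside $\overline{X_0}$ rather than an arbitrary open neighborhood of $C$ in $X$; without this refinement a naive open set built by normality could slice some other component of $\overline{X_0}$ in two and destroy saturation.
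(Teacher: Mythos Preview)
Your argument is correct, and it takes a genuinely different route from the paper. The paper proves upper semi-continuity by a sequential/Hausdorff-limit argument that leans on the fiber structure: given $y_k \to y$ with $[y_k]_\sim \not\subset U$, it extracts a Hausdorff limit $M$ of the classes $[y_k]_\sim$ and then argues, using the very definition of $F_y^*$ via finite separating sets, that $M \subset F_y^* \subset [y]_\sim$, so $[y]_\sim$ also escapes $U$. Your proof, by contrast, never touches fibers at all: you use only that the nontrivial classes are the connected components of the closed set $\overline{X_0}$, find a clopen separator $K$ in $\overline{X_0}$ via the components--equal--quasi-components fact, and thicken it by normality. This is really a proof of the general statement that for any closed $A$ in a compact metric space, the decomposition into components of $A$ and singletons of $X\setminus A$ is upper semi-continuous. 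What your approach buys is simplicity and generality; what the paper's approach buys is a direct illustration of how the fiber $F_y^*$ absorbs limiting continua, a mechanism that reappears in the proof of Theorem~\ref{lc}.
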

\begin{proof}
Given a set $U$ open in $X$, we need to show that the union $U_\sim\subset U$ of all the classes $[x]_\sim\subset U$ is open in $X$. In other words, we need to show that $X\setminus U_\sim$ is closed in $X$, which implies that $\pi(X\setminus U_\sim)=(X/\!\sim)\setminus\pi(U_\sim)$ is closed in the quotient $X/\!\sim$. Here, we note that $X\setminus U_\sim$ is just the union of all the classes $[x]_\sim$ that intersects $X\setminus U$.

Assume that $y_k\in X\setminus U_\sim$ is a sequence converging to $y$, we will show that $[y]_\sim\setminus U\ne\emptyset$, hence that $y\in X\setminus U_\sim$. Let $z_k$ be a point in $[y_k]_\sim\setminus U$ for each $k\ge1$. By coming to an appropriate subsequence, we may further assume that
\begin{itemize}
\item $[y_i]_\sim\cap[y_j]_\sim=\emptyset$ for $i\ne j$;
\item the sequence of continua $[y_k]_\sim$ converges to a continuum $M$ under Hausdorff metric;
\item the sequence $z_k$ converges to a point $z_\infty$.
\end{itemize}
Clearly, we have $z_\infty\in M$; and, as $X\setminus U$ is compact, we also have $z_\infty\in X\setminus U$. If the sequence $[y_k]_\sim$ is finite, then $M=[y]_\sim$, thus $[y]_\sim\setminus U\ne\emptyset$. If the sequence $[y_k]_\sim$ is infinite, let us check that $M\subset F_y^*$. Indeed, for any point $z\in M$ and for any finite set $C\subset X$ disjoint from $\{y,z\}$, all but finitely many $[y_k]_\sim$ are connected disjoint subsets of $X\setminus C$. It follows that there exists  no separation $X\setminus C=A\cup B$ such that $y\in A, z\in B$, because $y$ and $z$ are both limit points of the sequence of continua $[y_k]_\sim$. Hence $M\subset F_y^*\subset [y]_\sim$ and $z_\infty\in M\cap(X\setminus U)$, indicating that $[y]_\sim\setminus U\ne\emptyset$.
\end{proof}

\begin{theo}\label{lc}
The quotient $X/\!\sim$ is a locally connected continuum.
\end{theo}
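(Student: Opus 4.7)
My plan is to show that every class $[x]_\sim$ has a basis of open connected neighborhoods in $X/\!\sim$, which will give local connectedness. Given an open $\tilde U \ni [x]_\sim$ in $X/\!\sim$, set $W = \pi^{-1}(\tilde U)$, an open saturated subset of $X$ containing the compact continuum $[x]_\sim$. It suffices to produce a saturated open connected $V \subset W$ containing $[x]_\sim$, since then $\pi(V) \subset \tilde U$ will be the desired open connected neighborhood. My candidate is the connected component $Q$ of $W$ in $X$ containing $[x]_\sim$: $Q$ is automatically saturated (any $y \in Q$ has $[y]_\sim$ connected in $W$ and meeting $Q$, so $[y]_\sim \subset Q$ by maximality of the component), and I aim to show $Q$ is open in $X$.

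To verify openness of $Q$ pointwise, take $y \in Q$. For $y \notin \overline{X_0}$ the fiber $F_y^*$ is trivial, and Theorem~\ref{main-1} gives local connectedness of $X$ at $y$; a connected open neighborhood of $y$ in $W$ then sits inside $Q$. The substantive case is $y \in Q \cap \overline{X_0}$. Here I argue by contradiction: if $y_n \to y$ with $y_n \in W \setminus Q$, I choose $r > 0$ with $B(y, r) \cap X \subset W$ and let $Q_n^*$ be the component of $B(y, r) \cap X$ containing $y_n$. Lemma~\ref{reach-boundary} forces $Q_n^* \cap \partial B(y, r) \neq \emptyset$, so $\operatorname{diam}(Q_n^*)$ is bounded below by a positive constant, and the maximality of $Q$ in $W$ gives $Q_n^* \cap Q = \emptyset$. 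After passing to a pairwise disjoint, Hausdorff-convergent subsequence, $Q_n^* \to M$ for a nondegenerate continuum $M$ with $y \in M$.

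Two subclaims complete this case. First, $M \subset \partial X$: any $z \in M \cap X^o$ would, by Theorem~\ref{main-1}, have $X$ locally connected at $z$; choosing $z_n \in Q_n^*$ with $z_n \to z$, a small connected open $V_z \subset W$ around $z$ would contain $z_n$ for $n$ large, forcing $V_z$ to lie in each of infinitely many pairwise disjoint components $Q_n$ of $W$, an impossibility. Second, $M \subset [y]_\sim$: if some $z \in M$ satisfied $z \not\sim y$, the characterization of the modified fiber through finite separating sets in $X$ (blending Theorems~\ref{E=F}, \ref{main-1}, and~\ref{criterion}) would yield a finite $C \subset X$ and a separation $X \setminus C = A \cup B$ with $y \in A$ and $z \in B$; each $Q_n^*$ is connected and contains points converging to $y \in A$ and to $z \in B$, so $Q_n^*$ must meet $C$, yet pairwise disjointness of the $Q_n^*$ combined with the finiteness of $C$ allows this for only finitely many $n$, a contradiction.

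Given $M \subset [y]_\sim \subset Q$, the planned contradiction with $Q_n^* \cap Q = \emptyset$ comes from upper semi-continuity (Theorem~\ref{usc}): $[y]_\sim$ admits arbitrarily small saturated open neighborhoods $V_0$ in $W$, and Hausdorff convergence then gives $Q_n^* \subset V_0$ for large $n$; if $V_0$ is connected, then $V_0 \subset Q$ by the maximality argument applied to the connected set $V_0$ meeting $Q$ at $[y]_\sim$, forcing $Q_n^* \subset Q$ and contradicting $Q_n^* \cap Q = \emptyset$. The main obstacle of the proof is thus the last requirement: showing that every nontrivial class $[y]_\sim \subset \overline{X_0}$ admits a basis of saturated open \emph{connected} neighborhoods in $X$. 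This structural property must be extracted from upper semi-continuity, the local connectedness of $X$ at every point of $X \setminus \overline{X_0}$ (which provides ``bridges'' across nearby classes), and the organization of the components of $\overline{X_0}$; its verification is the most delicate technical step.
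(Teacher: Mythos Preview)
Your argument has a genuine gap, and it is circular. You reduce everything to the ``main obstacle'': every class $[y]_\sim$ with $y\in\overline{X_0}$ admits a basis of saturated open \emph{connected} neighborhoods in $X$. But a saturated open connected subset of $X$ is exactly the $\pi$-preimage of an open connected subset of $X/\!\sim$, so this statement is nothing other than local connectedness of $X/\!\sim$ at the point $[y]_\sim$ --- precisely what you set out to prove. The ``delicate technical step'' you defer is the theorem itself.

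The repair is short and is what the paper does. Replace the components $Q_n^*$ of the \emph{ball} $B(y,r)\cap X$ by the components $Q_n$ of $W$ itself that contain the $y_n$. Since $y_n\notin Q$, each $Q_n$ is a component of $W$ distinct from $Q$, and (assuming without loss of generality $W\ne X$) the boundary bumping theorem gives $\overline{Q_n}\setminus W\ne\emptyset$. Pass to a Hausdorff-convergent subsequence $\overline{Q_n}\to M$; then $M$ is a continuum with $y\in M$ and $M\setminus W\ne\emptyset$. Your own ``second subclaim'' argument --- infinitely many pairwise disjoint connected sets $Q_n$ converging onto $M$, so no finite $C$ can separate $y$ from any $z\in M$ --- now yields $M\subset F_y^*$. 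Since $F_y^*\subset[y]_\sim\subset Q\subset W$, this contradicts $M\setminus W\ne\emptyset$, and the proof is finished. No connected saturated neighborhood of $[y]_\sim$ is ever needed.

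The point is that boundary bumping, applied to components of $W$, forces the limit continuum $M$ to \emph{escape} $W$, and that escape is already incompatible with $M\subset F_y^*\subset W$. When you instead take components of a small ball, Lemma~\ref{reach-boundary} only pushes $M$ out to $\partial B(y,r)$, which lies safely inside $W$ and yields no contradiction; you are then forced to manufacture a connected trap for the $Q_n^*$, and that is where the circularity enters. With the correct choice of $Q_n$, both your subclaim $M\subset\partial X$ and the final neighborhood construction become unnecessary.
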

\begin{proof}
As $X$ is a continuum, $\pi(X)=X/\!\sim$ is itself a continuum. We now prove that this quotient is locally connected. If $V$ is an open set in  $X/\!\sim$ that contains $[x]_{\sim}$, as an element of $X/\!\sim$, then the pre-image $U:=\pi^{-1}(V)$ is open in $X$ and contains the class $[x]_{\sim}$ as a subset. We shall prove that $V$ contains a connected neighborhood of $[x]_\sim$. Without loss of generality, we assume that $U\ne X$. Let $Q$ be the component of $U$ that contains $[x]_{\sim}$. By the boundary bumping theorem~\cite[Theorem 5.7, p75]{Nadler92} (see also~\cite[p.41, Exercise 2]{Whyburn79}), since $X$ is connected, we have $\overline{Q}\setminus U\ne\emptyset$. Moreover, our proof will be completed by the following claim.

\bigskip

\noindent
{\bf Claim.} The connected set $\pi(Q)$, hence the component of $V$ that contains $[x]_{\sim}$ as a point, is a neighborhood of $[x]_{\sim}$ in the quotient space $X/\!\sim$.

Otherwise, there would exist an infinite sequence of points  $[x_k]_{\sim}$ in $V\setminus \pi(Q)$ such that $\lim\limits_{k\rightarrow\infty}[x_k]_{\sim}= [x]_{\sim}$ under the quotient topology. Since $U=\pi^{-1}(V)$, every $x_k$ belongs to $U$. Let $Q_k$ be the component of $U$ that contains $x_k$. Here we have $Q_k\cap Q=\emptyset$. And, by the above mentioned boundary bumping theorem, we also have $\overline{Q_k}\setminus U\ne\emptyset$.

Now, choose points $y_k\in [x_k]_{\sim}$ for every $k\ge1$ such that $\{y_k\}$ has a limit point $y$. Here, we certainly have $[y_k]_{\sim}=[x_k]_{\sim}$ and $[y]_{\sim}=[x]_{\sim}$.
By coming to an appropriate subsequence, we may assume that $\lim\limits_{k\rightarrow\infty}y_k=y$ and that  $\lim\limits_{k\rightarrow\infty}\overline{Q_k}=M$ under Hausdorff metric. Then $M$ is a continuum with $y\in M\subset Q$ and $M\setminus U\ne\emptyset$, indicating that the fiber $F_y^*$ contains $M$, hence intersects $X\setminus U$.  In other words, $F_y^*\nsubseteq U$, which contradicts the inclusions $y\in Q\subset U$ and $F_y^*\subset [y]_{\sim}=[x]_\sim\subset U$.
\end{proof}

\section{How fibers are changed under continuous maps}\label{fiber-basics}

In this section, we discuss how fibers  are changed under continuous maps. As a special application, we may compare the dynamics of a polynomial $f_c(z)=z^n+c$ on its Julia set $J_c$, the expansion $z\mapsto z^d$ on unit circle, and an induced map $\tilde{f}_c$ on the quotient $J_c/\!\sim$.

Let $X, Y\subset\bbC$ be continua and $x\in X$ a point. The first primary observation is that $f(F_x^*)\subset F_{f(x)}^*$ for any finite-to-one continuous surjection $f: X\rightarrow Y$.

Indeed, for any $y\ne x$ in the fiber $F_x^*$ and any finite set $C\subset Y$ that is disjoint from $\{f(x),f(y)\}$, we can see that $f^{-1}(C)$ is a finite set disjoint from $\{x,y\}$. Since $y\in F_x^*$ there exists no separation $X\setminus f^{-1}(C)=A\cup B$ with $x\in A, y\in B$; therefore, there exists no separation $Y\setminus C=P\cup Q$ with $f(x)\in P, f(y)\in Q$.  This certifies that $f(y)\in F_{f(x)}^*$.

By the above inclusion $f(F_x^*)\subset F_{f(x)}^*$ we further have $f(X_0)\subset Y_0$. Here $X_0$ is the union of all the nontrivial fibers $F_x^*$ in $X$, and $Y_0$ the union of those in $Y$. It follows that $f([x]_\sim)\subset[f(x)]_\sim$. Therefore, the correspondence $[x]_\sim\xrightarrow{\hspace{0.2cm}\tilde{f}\hspace{0.2cm}}[f(x)]_\sim$ gives a well defined map  $\tilde{f}: X/\!\sim\ \rightarrow Y/\!\sim$ that satisfies the following commutative diagram, in which each downward arrow $\downarrow$ indicates the natural projection $\pi$ from a space onto its quotient.
\[\begin{array}{ccc} X&\xrightarrow{\hspace{1cm}f\hspace{1cm}}&Y\\ \downarrow&&\downarrow\
\\ X/\!\sim&\xrightarrow{\hspace{1cm}\tilde{f}\hspace{1cm}}&Y/\!\sim
\end{array}\]
Given an open set $U\subset Y/\!\sim$, we can use the definition of quotient topology to infer that $V:=\tilde{f}^{-1}(U)$ is open in $X/\!\sim$ whenever $\pi^{-1}(V)$ is open in $X$. On the other hand, the above diagram ensures that $\pi^{-1}(V)=f^{-1}\left(\pi^{-1}(U)\right)$, which is an open set of $X$, by continuity of $f$ and $\pi$.

The above arguments lead us to a useful result for the study of dynamics on Julia sets.

\begin{theo}\label{dynamic}
Let $X, Y\subset\bbC$ be continua. Let the relation $\sim$ be defined as in Theorem \ref{main-3}. If $f: X\rightarrow Y$ is continuous, surjective and finite-to-one then $\tilde{f}([x]_\sim):=[f(x)]_\sim$ defines a continuous map with $\pi\circ f=\tilde{f}\circ\pi$.
\end{theo}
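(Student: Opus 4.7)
The plan is to transfer the fiber containment through $f$, show that this containment propagates to the $\sim$-equivalence classes so that $\tilde{f}$ is well defined, and finally deduce continuity from the universal property of the quotient topology.

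First I would verify the fiber-to-fiber inclusion $f(F_x^*) \subset F_{f(x)}^*$ for every $x\in X$. Given $y \in F_x^*$ with $y\ne x$, take any finite set $C\subset Y$ disjoint from $\{f(x),f(y)\}$. Since $f$ is finite-to-one, $f^{-1}(C)$ is a finite subset of $X$ disjoint from $\{x,y\}$. Because $y\in F_x^*=E_x^*$ by Theorem~\ref{main-1}, there is no separation $X\setminus f^{-1}(C)=A\cup B$ with $x\in A$, $y\in B$; but any hypothetical separation $Y\setminus C=P\cup Q$ with $f(x)\in P$, $f(y)\in Q$ would pull back to $X\setminus f^{-1}(C)=f^{-1}(P)\cup f^{-1}(Q)$, a separation of the forbidden type. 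Hence $f(y)\in F_{f(x)}^*$.

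Next I would upgrade this to the classes $[x]_\sim$. From $f(F_x^*)\subset F_{f(x)}^*$ one gets $f(X_0)\subset Y_0$, where $X_0$ and $Y_0$ are the respective unions of nontrivial fibers. Since $X$ is compact and $f$ is continuous, $f(\overline{X_0})\subset \overline{Y_0}$; since $f$ preserves connectedness, it sends each component of $\overline{X_0}$ into a single component of $\overline{Y_0}$. Therefore if $x\sim x'$ (either $x=x'$, or both lie in the same component of $\overline{X_0}$), then $f(x)\sim f(x')$. This shows that $\tilde{f}([x]_\sim):=[f(x)]_\sim$ is well defined, and the relation $\pi\circ f=\tilde{f}\circ\pi$ holds by construction.

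Finally, continuity of $\tilde{f}$ follows from the universal property of the quotient. Writing $\pi_X$ and $\pi_Y$ for the two natural projections, one has $\tilde{f}^{-1}(U) \subset X/\!\sim$ open if and only if $\pi_X^{-1}(\tilde{f}^{-1}(U))$ is open in $X$, and the commutative diagram gives $\pi_X^{-1}(\tilde{f}^{-1}(U))=f^{-1}(\pi_Y^{-1}(U))$, which is open by continuity of $f$ and $\pi_Y$. The main step I expect to require some care is the passage from $f(F_x^*)\subset F_{f(x)}^*$ to $f([x]_\sim)\subset [f(x)]_\sim$: the relation $\sim$ is defined via components of $\overline{X_0}$ rather than fibers directly, so one must combine the fiber inclusion with the fact that $f$ is continuous on a compact space and preserves connectedness in order to see that whole $\sim$-classes land inside $\sim$-classes.
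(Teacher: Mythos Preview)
Your proposal is correct and follows essentially the same route as the paper: first establish $f(F_x^*)\subset F_{f(x)}^*$ via the pullback of finite separating sets, then pass to $f([x]_\sim)\subset[f(x)]_\sim$, and finally get continuity from the universal property of the quotient topology. In fact you spell out more carefully than the paper does the step from $f(X_0)\subset Y_0$ to $f([x]_\sim)\subset[f(x)]_\sim$, using $f(\overline{X_0})\subset\overline{Y_0}$ and that continuous images of connected sets are connected; the paper simply asserts ``It follows that $f([x]_\sim)\subset[f(x)]_\sim$'' without elaboration.
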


\begin{rema}
Every polynomial $f_c(z)=z^n+c$ restricted to its Julia set $J_c$ satisfies the conditions of Theorem \ref{dynamic}, if we assume that $J_c$ is connected; so the restricted system $f_c: J_c\rightarrow J_c$ has a factor system $\tilde{f}_c: J_c/\!\sim\rightarrow J_c/\!\sim$, whose underlying space is a locally connected continuum.
\end{rema}

Let $X\subset\bbC$ be an unshielded continuum and $U_\infty$ the unbounded component of $\bbC\setminus X$. Here, $X$ is unshielded provided that $X=\partial U_\infty$. Let $\bbD:=\{z\in\hat{\bbC}: |z|\le1\}$ be the unit closed disk. By Riemann Mapping Theorem, there exists a conformal isomorphism $\Phi: \hat{\bbC}\setminus\bbD\rightarrow U_\infty$  that fixes $\infty$ and has positive derivative at $\infty$. The prime end theory \cite{CP02, UrsellYoung51} builds a correspondence between an angle $\theta\in S^1:=\partial\bbD$ and a continuum
\[\displaystyle Imp(\theta):=\left\{w\in X: \ \exists\ z_n\in\bbD\ \text{with}\ z_n\rightarrow e^{{\bf i}\theta}, \lim\limits_{n\rightarrow\infty}\Phi(z_n)=w\right\}\]
We call $Imp(\theta)$ the {\em impression of $\theta$}. By \cite[p.173, Theorem 9.4]{CL66}, we may fix a simple open arc ${\mathcal R}_\theta$ in $\bbC\setminus\bbD$ landing at the point $e^{{\bf i}\theta}$ such that $\overline{\Phi({\mathcal R}_\theta)}\cap X=Imp(\theta)$.

We will connect impressions to fibers. Before that, we obtain a useful lemma concerning good cuts of an unshielded continuum $X$ on the plane. Here a good cut of $X$ is a simple closed curve that intersects $X$ at a finite subset (see Remark~\ref{modified}).

\begin{lemm}\label{optimal-cut}
Let $X\subset\bbC$ be an unshielded continuum and $U_\infty$ the unbounded component of $\bbC\setminus X$. Let $x$ and $y$ be two points on $X$ separated by a good cut of $X$. Then we can find a good cut separating $x$ from $y$ that intersects $U_\infty$ at an open arc.
\end{lemm}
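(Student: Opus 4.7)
Let $K := \bbC\setminus U_\infty$; then $K$ is a full continuum with $\partial K = X$ and $U_\infty^* := U_\infty \cup \{\infty\}$ is a simply connected subdomain of $\hat{\bbC}$. Denote by $D_1 \ni x$ and $D_2 \ni y$ the bounded and unbounded components of $\bbC\setminus \gamma$. I would first show $\gamma \cap U_\infty \neq \emptyset$: otherwise $\gamma \subset K$, and connectivity of the unbounded set $U_\infty$ forces $U_\infty \subset D_2$, hence $D_1 \subset K$; but this contradicts $x \in X = \partial U_\infty$ lying in the open set $D_1 \subset K$, since $K \cap U_\infty = \emptyset$ forbids $U_\infty$-points in $D_1$. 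Thus $\gamma \cap U_\infty$ decomposes as a nonempty finite disjoint union of open arcs $\alpha_1, \dots, \alpha_m$ ($m \geq 1$) whose endpoints lie in the finite set $\gamma \cap X$.

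Next I would single out one distinguished arc. Since $D_1$ is open and $x \in \partial U_\infty$, one may pick $w \in U_\infty \cap D_1$ arbitrarily close to $x$. Any path in $U_\infty^*$ from $w$ to $\infty$ must cross $\gamma \cap U_\infty^* = \bigcup_k \alpha_k$; let $\alpha := \alpha_{k_0}$ be any arc it crosses. As a cross-cut of the simply connected domain $U_\infty^*$, $\alpha$ partitions $U_\infty^*$ into two simply connected subdomains $W_w \ni w$ and $W_\infty \ni \infty$, with endpoints $a, b \in X$. Letting $w$ vary towards $x$ shows that $x$ lies in the closure of $W_w$ in $\hat{\bbC}$.

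I would then construct $\gamma' := \alpha \cup \beta$, where $\beta$ is an arc from $a$ to $b$ in $K$ with $\beta \cap X$ finite, adapting the thickening/Torhorst construction in the proof of Proposition~\ref{key}. Let $P_1$ be the $x$-component of $X \cap \overline{W_w}$, augmented by the bounded complementary components of $X$ whose boundaries lie in $P_1$. Thicken $P_1$ by small open disks with radii chosen small enough to avoid the arcs $\alpha_k$ with $k \neq k_0$, obtaining a locally connected continuum $M$ containing $x$, $a$, $b$ and meeting $U_\infty$ only near $\overline{\alpha} = \alpha \cup \{a,b\}$. Applying the Torhorst theorem (Lemma~\ref{torhorst}) to the boundary of the component of $\bbC\setminus M$ containing $\infty$ then produces a simple closed curve; with careful tuning of the radii, this curve $\gamma'$ satisfies $\gamma' \cap U_\infty = \alpha$ and separates $x$ from $y$ (the former lies in $M$'s interior, on the bounded side, while the latter lies in the $\infty$-component of $\bbC\setminus M$).

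The main technical obstacle is this tuning step: controlling the radii precisely enough to ensure that the $U_\infty$-portion of the resulting simple closed curve is exactly the single arc $\alpha$, neither missing it nor picking up any of the other arcs $\alpha_k$ ($k \neq k_0$), while keeping $\gamma' \cap X$ finite and preserving the separation of $x$ from $y$. This refines the Proposition~\ref{key} argument by imposing an additional constraint on the $U_\infty$-portion of the constructed cut.
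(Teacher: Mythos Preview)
Your plan takes a route quite different from the paper's, and the difference is instructive. The paper does \emph{not} try to build a good cut with a single $U_\infty$-arc directly. Instead it argues by descent on the number $k$ of open arcs in $\gamma\cap U_\infty$: if $k\ge 2$, pull the arcs back by the Riemann map $\Phi^{-1}$ to disjoint cross-cuts $\alpha_1,\ldots,\alpha_k$ of $\{|z|>1\}$, join two of them by a fresh arc $\beta\subset\{|z|>1\}$ disjoint from all the $\alpha_i$, and push forward. Then $\gamma\cup\Phi(\beta)$ is a $\Theta$-curve, and the $\Theta$-curve theorem guarantees that one of its two constituent simple closed curves still separates $x$ from $y$ --- now with at most $k-1$ arcs in $U_\infty$. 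Iterate until $k=1$. Each step is a small surgery on the \emph{existing} good cut, performed entirely in $U_\infty$, and the separation property is inherited for free from the $\Theta$-curve theorem.

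Your plan, by contrast, keeps a single distinguished arc $\alpha$ and tries to manufacture the complementary arc $\beta$ inside $K$ via a Torhorst-type thickening of a piece $P_1$ of $X$. There is a genuine gap here, prior to the ``tuning'' obstacle you flag: you give no argument that the constructed curve separates $x$ from $y$. You take $P_1$ to be the $x$-component of $X\cap\overline{W_w}$, but nothing prevents $y$ from lying in $P_1$ as well. The set $X\cap\overline{W_w}$ contains points of $\gamma\cap X$ (the endpoints of $\alpha$, and possibly endpoints of other $\alpha_k\subset W_w$), and through such points $x$ and $y$ may be joined inside $X\cap\overline{W_w}$ even though $\gamma$ separates them in $\bbC$. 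Even if $y\notin P_1$, you still need $y$ to land in the $\infty$-component of $\bbC\setminus M$, not merely outside $M$. (A smaller issue: ``let $\alpha$ be any arc the path crosses'' is not enough --- a path can cross an arc an even number of times without the arc separating its endpoints; you need the cross-cut that actually separates $w$ from $\infty$ in $U_\infty^*$, which does exist by the tree structure of cross-cuts in a simply connected domain.) The paper's iterative $\Theta$-curve argument sidesteps all of this: the separation of $x$ from $y$ never has to be re-established from scratch.
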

\begin{proof}
Since each of the two components of $\bbC\setminus\gamma$ intersects $\{x,y\}$, we have $\gamma\cap U_\infty\ne\emptyset$. Since $\gamma\cap X$ is a finite set, the difference $\gamma\setminus X$ has finitely many components. Let $\gamma_1,\ldots,\gamma_k$ be the components of $\gamma\setminus X$ that lie in $U_\infty$. Let $\alpha_i=\Phi^{-1}(\gamma_i)$ be the pre-images of $\gamma_i$ under $\Phi$. Then every $\alpha_i$ is a simple open arc in $\{z: |z|>1\}$ whose end points $a_i, b_i$ are located on the unit circle; and all those open arcs $\alpha_1,\ldots,\alpha_k$ are pairwise disjoint.

If $k\geq 2$, rename the arcs $\alpha_2,\ldots,\alpha_k$ so that we can find an open arc $\beta\subset(\bbC\setminus\bbD)$ disjoint from $\bigcup_{i=1}^k\alpha_i$ that connects a point $a$ on $\alpha_1$ to a point $b$ on $\alpha_2$. Then $\gamma\cup\Phi(\beta)$ is a $\Theta$-curve separating $x$ from $y$ (see~\cite[Part B, Section VI]{Whyburn79} for a definition of $\Theta$-curve). Let $J_1$ and $J_2$ denote the two components of $\gamma\setminus\overline{\Phi(\beta)}=\gamma\setminus\{\Phi(a),\Phi(b)\}$. Then $J_1\cup\overline{\Phi(\beta)}$ and $J_2\cup\overline{\Phi(\beta)}$ are both good cuts of $X$. One of them, denoted by $\gamma'$, separates $x$ from $y$~\cite[$\Theta$-curve theorem, p.123]{Whyburn79}. By construction, this new good cut intersects $U_\infty$ at $k'$ open arcs for some $1\leq k'\leq k-1$. For relative locations of $J_1,J_2$ and $\Phi(\beta)$ in $\hat{\bbC}$, we refer to Figure~\ref{theta} in which $\gamma$ is represented as a circular circle, although a general good cut is usually not a circular circle.
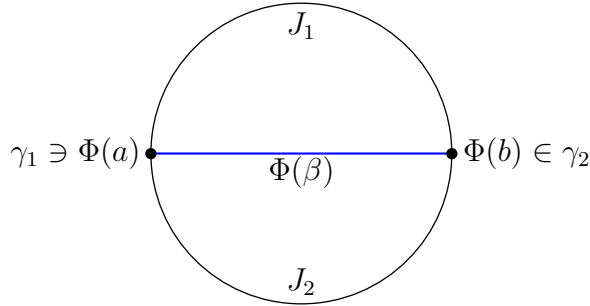
\begin{figure}[ht]
\begin{center}
\begin{pspicture}(4,4)(0,0.3)
\linethickness{0.5pt}

\rput(2,2){\circle{4}}
\rput(0,2){\psline{-,linecolor=blue}(0,0)(4,0)}
\rput(2,1.75){$\Phi(\beta)$}  \rput(2,3.7){$J_1$}  \rput(2,0.3){$J_2$}
\multiput(0,2)(4,0){2}{\circle*{0.15}}  \rput(-1,2){$\gamma_1\ni\Phi(a)$} \rput(5,2){$\Phi(b)\in\gamma_2$}
\end{pspicture}
\end{center}
\caption{The $\Theta$-curve together with the arcs $J_1$, $J_2$, and $\Phi(\beta)$.}\label{theta}
\end{figure}
If $k'\geq 2$, we may use the same argument on $\gamma'$ and obtain a good cut $\gamma''$, that separates $x$ from $y$ and that intersects $U_\infty$ at $k''$ open arcs for some $1\leq k''\leq  k-2$. Repeating this procedure for at most $k-1$ times, we will obtain a good cut separating $x$ from $y$ that intersects $U_\infty$ at a single open arc.
\end{proof}

\begin{theo}\label{impression}
Let $X\subset\bbC$ be an unshielded continuum. Then every impression $Imp(\theta)$ is contained in a fiber $F_w^*$ for some $w\in Imp(\theta)$.
\end{theo}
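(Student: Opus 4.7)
The plan is to show that no two points $w_1,w_2\in Imp(\theta)$ can be separated by any good cut of $X$ in $\bbC$; once this is established, Theorem~\ref{criterion} will place the whole impression in the single fiber $F^*_{w_1}$. I would argue by contradiction: suppose $\gamma$ is a good cut separating $w_1$ from $w_2$, and use Lemma~\ref{optimal-cut} to further assume that $\gamma\cap U_\infty$ consists of a single open arc $\eta$.

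The first step is to analyse the topology of $\gamma$ through the Riemann map $\Phi$. The preimage $\Phi^{-1}(\eta)$ is a crosscut of $\hat\bbC\setminus\bbD^o$ landing at two points $e^{{\bf i}\alpha},e^{{\bf i}\beta}\in S^1$ and divides the closed exterior disk into two Jordan regions $D_1,D_2$ whose $\Phi$-images are the two components $U_1,U_2$ of $U_\infty\setminus\eta$. Each $U_j$ lies in a single component $\Omega_1$ or $\Omega_2$ of $\bbC\setminus\gamma$; the unshielded hypothesis $X=\partial U_\infty$ forces $U_1,U_2$ to lie in distinct $\Omega_j$'s, since if both were in $\Omega_1$, then $\Omega_2$ would be disjoint from $U_\infty$, whereas $w_2\in\Omega_2\cap X=\Omega_2\cap\partial U_\infty$ requires a neighborhood of $w_2$ in $\Omega_2$ to meet $U_\infty$. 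After relabeling so $U_j\subset\Omega_j$, approaches $z_n^{(j)}\to e^{{\bf i}\theta}$ with $\Phi(z_n^{(j)})\to w_j\in\Omega_j$ must eventually lie in $D_j$, whence $e^{{\bf i}\theta}\in\overline{D_1}\cap\overline{D_2}\cap S^1=\{e^{{\bf i}\alpha},e^{{\bf i}\beta}\}$.

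This splits into two cases. The easy case, $e^{{\bf i}\theta}\notin\{e^{{\bf i}\alpha},e^{{\bf i}\beta}\}$, resolves at once: a small neighborhood of $e^{{\bf i}\theta}$ in $\hat\bbC\setminus\bbD$ sits inside a single $D_j$, so $Imp(\theta)\subset\overline{U_j}\subset\overline{\Omega_j}$, directly contradicting $w_{\bar j}\in\Omega_{\bar j}$. The delicate case, and the main obstacle of the proof, is $e^{{\bf i}\theta}\in\{e^{{\bf i}\alpha},e^{{\bf i}\beta}\}$; without loss of generality $e^{{\bf i}\theta}=e^{{\bf i}\alpha}$, in which case the $\Phi$-principal point $p$ along $\Phi^{-1}(\eta)$ at $e^{{\bf i}\alpha}$ lies in $X\cap\gamma\cap Imp(\theta)$ and the direct argument breaks down.

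To dispose of the delicate case, the plan is to replace $\gamma$ with a new good cut $\gamma'$, still separating $w_1$ from $w_2$, whose single open arc $\eta':=\gamma'\cap U_\infty$ has $\Phi^{-1}(\eta')$ landing at two angles neither equal to $\theta$; the easy case then applies to $\gamma'$ and closes the argument. Concretely, one truncates $\Phi^{-1}(\eta)$ at a point $z^*$ very close to $e^{{\bf i}\theta}$ and attaches a short auxiliary arc landing at $e^{{\bf i}\alpha'}$, for some angle $\alpha'$ close to but distinct from $\theta$ and $\beta$, selected via Lindel\"of-type results so that the radial segment at $\alpha'$ has a unique $\Phi$-limit $\tilde p$ near $p$. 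The push-forward under $\Phi$ produces a new crosscut $\eta'$ of $U_\infty$ with one endpoint $\tilde p$; $\gamma'$ is then completed into a Jordan curve by joining $\tilde p$ to the unchanged portion $\gamma\setminus\eta$ via a short path lying in $X$ together with the bounded complementary components of $\bbC\setminus X$, arranged to meet $X$ at only finitely many points. Since the modification is confined to a small neighborhood of $p$ disjoint from $w_1,w_2$ and from the other endpoint of $\eta$, the new $\gamma'$ is a good cut separating $w_1$ from $w_2$ with the desired landing angles $e^{{\bf i}\alpha'},e^{{\bf i}\beta}$, and the easy case applied to $\gamma'$ yields the contradiction.
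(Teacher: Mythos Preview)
Your setup, the reduction via Lemma~\ref{optimal-cut}, and the handling of the easy case $e^{{\bf i}\theta}\notin\{e^{{\bf i}\alpha},e^{{\bf i}\beta}\}$ are correct and essentially coincide with the paper's argument. The problem lies in the delicate case.

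The modification of $\gamma$ into $\gamma'$ is not justified. First, there is no reason for the radial limit $\tilde p$ at a nearby angle $\alpha'$ to be close to $p$: radial limits of $\Phi$ exist almost everywhere by Fatou but are in no sense continuous in the angle when $X$ fails to be locally connected, so $\tilde p$ may land far from $p$ (possibly near $w_1$ or $w_2$), and the claim that ``the modification is confined to a small neighborhood of $p$'' collapses. Second, and more seriously, you must connect $\tilde p\in X$ to the endpoint $p$ of $\gamma\setminus\eta$ by an arc inside the filled hull $K=\bbC\setminus U_\infty$ that meets $X=\partial K$ in only finitely many points and keeps the whole curve simple. Nothing guarantees such an arc exists: $K$ may have empty interior near $p$ and $\tilde p$, and $X$ need not be locally arcwise connected there. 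Without this, $\gamma'$ is not a good cut and the reduction to the easy case fails.

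The paper sidesteps the surgery altogether by exploiting the freedom in the statement: it only needs \emph{some} $w\in Imp(\theta)$, not every one. Given $x,y\in Imp(\theta)$ separated by a good cut $\gamma$ (with $\alpha_1=\Phi^{-1}(\gamma\cap U_\infty)$ landing at $a=e^{{\bf i}\theta}$ as in your delicate case), the paper \emph{defines} $w:=\lim_{z\to a,\,z\in\alpha_1}\Phi(z)$, a point of $\gamma\cap Imp(\theta)$. This $w$ has a built-in property: for any good cut $\gamma'$ with $w\notin\gamma'$, the crosscut $\alpha_1'=\Phi^{-1}(\gamma'\cap U_\infty)$ cannot land at $a$, because by Lindel\"of-type uniqueness of asymptotic values the limit of $\Phi$ along any arc landing at $a$ would again be $w$, forcing $w\in\gamma'$. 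Hence every good cut separating $w$ from another point of $Imp(\theta)$ automatically falls into your easy case, and the contradiction is immediate. Note also that your target, ``no two points of $Imp(\theta)$ can be separated by a good cut'', is strictly stronger than what the theorem asserts; the paper's proof structure explicitly allows such separated pairs $x,y$ to exist and still produces the required $w$.
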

\begin{proof}
Suppose that a point $y\ne x$ on $Imp(\theta)$ is separated from $x$ in $X$ by a finite set. By Proposition \ref{key}, we can find a good cut $\gamma$ separating $x$ from $y$. By Lemma \ref{optimal-cut}, we may assume that $\gamma\cap U_\infty$ is an open arc $\gamma_1$. Let $a$ and $b$ be the two end points of $\alpha_1=\Phi^{-1}(\gamma_1)$, an open arc in $\bbC\setminus\bbD$.

Fix an open arc ${\mathcal R}_\theta$ in $\bbC\setminus\bbD$ landing at the point $e^{{\bf i}\theta}$ such that $\overline{\Phi({\mathcal R}_\theta)}\cap X=Imp(\theta)$. We note that $e^{{\bf i}\theta}\in \{a,b\}$. Otherwise, there is a number $r>1$ such that $\mathcal{R}_\theta\cap\{z: |z|<r\}$ lies in the component of
\[(\bbC\setminus\bbD)\setminus(\{a,b\}\cup \alpha_1)\qquad (\text{difference\ of}\ \bbC\setminus\bbD\ \text{and}\ \{a,b\}\cup \alpha_1)\]
whose closure contains $e^{{\bf i}\theta}$. From this we see that
$\Phi(\mathcal{R}_\theta\cap\{z: |z|<r\})$
is disjoint from $\gamma$ and is entirely contained in one of the two components of $\bbC\setminus\gamma$, which contain $x$ and $y$ respectively. Therefore,
\[\overline{\Phi(\mathcal{R}_\theta\cap\{z: |z|<r\})}\]
hence its subset $Imp(\theta)$ cannot contain  $x$ and $y$ at the same time. This contradicts the assumption that $x,y\in Imp(\theta)$.

Now we will lose no generality by assuming that $e^{{\bf i}\theta}=a$. Then $\Phi({\mathcal R}_\theta)$ intersects $\gamma_1$ infinitely many times, since $\overline{\Phi({\mathcal R}_\theta)}\setminus\Phi({\mathcal R}_\theta)$ contains $\{x,y\}$. This implies that $a$ is the landing point of $\mathcal{R}_\theta\subset(\bbC\setminus\bbD)$.

Let $w=\lim_{z\rightarrow a}\left.\Phi\right|_{\alpha_1}(z)$. Then $\{x,y,w\}\subset Imp(\theta)$, and the proof will be completed if we can verify that $Imp(\theta)\subset F_w^*$.

Suppose there is a point $w_1\in Imp(\theta)$ that is not in $F_w^*$. By Lemma \ref{optimal-cut} we may find a good cut $\gamma'$ separating $w$ from $w_1$  that intersects $U_\infty$ at an open arc $\gamma_1'$. Let $\alpha_1'=\Phi^{-1}(\gamma_1')$. Let $I$  be the component of $S^1\setminus\overline{\alpha_1'}$ that contains $a$. Since $w\notin\gamma'$, the closure $\overline{\alpha_1'}$ does not contain the point $a$. Therefore, $\mathcal{R}_\theta\cap\{z: |z|<r_1\}$ is disjoint from $\alpha_1'$ for some $r_1>1$. For such an $r_1$, the image $\Phi(\mathcal{R}_\theta\cap\{z: |z|<r_1\})$   is disjoint from $\gamma'$. On the other hand, the good cut $\gamma'$ separates $w$ from $w_1$. Therefore, the closure of $\Phi(\mathcal{R}_\theta\cap\{z: |z|<r_1\})$  hence its subset $Imp(\theta)$ does not contain  the two points $w$ and $w_1$ at the same time. This is a contradiction.
\end{proof}

\begin{rema}\label{z^d-factor}
Let $J_c$ be the connected Julia set of a polynomial.
The equivalence classes $[x]_\sim$ obtained in this paper determine an upper semi-continuous decomposition of $J_c$, such that the quotient space is a locally connected continuum. Theorem \ref{impression} says that the impression of any prime end is entirely contained in a single class $[x]_\sim$. Therefore, the finest decomposition mentioned in \cite[Theorem 1]{BCO11} is finer than $\{[x]_\sim: x\in J_c\}$. Currently it is not clear whether these two decompositions just coincide. This is proposed as an open question in Problem \ref{finest}.
\end{rema}

\section{Facts and Examples}\label{examples}

In this section, we give several examples to demonstrate the difference between (1) separating and  cutting sets, (2) the fiber $F_x^*$ and the class $[x]_\sim$, (3) a continuum $X\subset\bbC$ and the quotient space $X/\!\sim$ for specific choices of $X$. We also construct an infinite sequence of continua which have scales of any $k\ge 2$ and even up to $\infty$, although the quotient of each of those continua is always homeomorphic with the unit interval $[0,1]$.

\begin{exam}[{\bf Separating Sets and Cutting sets}]\label{cutting-2}

For a set $M\subset\bbC$,
a set $C\subset M$ is said to \emph{separate} or to be a \emph{separating set between} two points $a,b\subset M$ if there is a separation $M\setminus C=P\cup Q$ satisfying $a\in P, b\in Q$;
and a subset $C\subset M$ is called a {\em cutting set between two points $a,b\in M$} if $\{a,b\}\subset(X\setminus C)$ and if the component of $X\setminus C$ containing $a$ does not contain $b$ \cite[p.188,$\S$47.VIII]{Kuratowski68}.

Let $L_1$ be the segment between the points $(2,1)$ and $(2,0)$ on the plane, $Q_1$ the one between $(-2,0)$ and $c=(0,\frac{1}{2})$, and $P_1$ the broken line connecting $(2,0)$ to $(-2,0)$ through $(0,-1)$, as shown in Figure \ref{cutting-separating}.
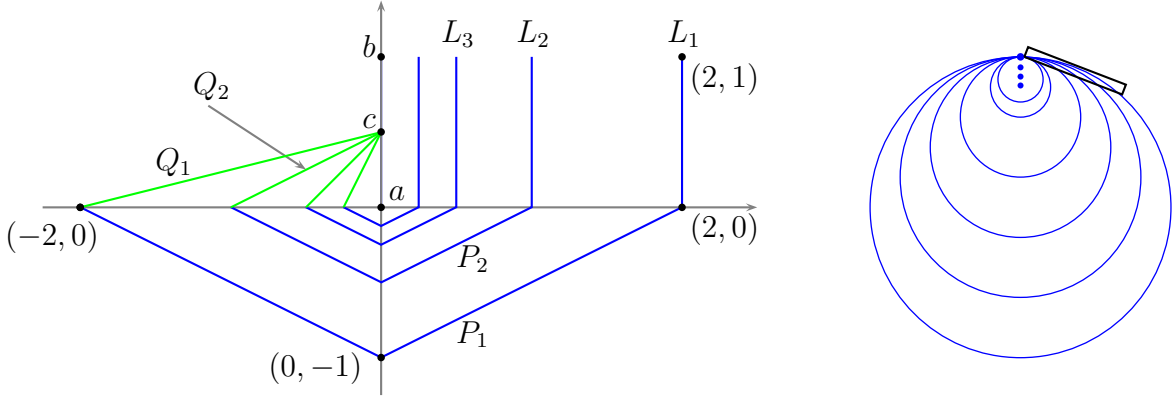
\begin{figure}[ht]
\begin{pspicture}(15,4.35)(-0.5,-0.25)
\linethickness{0.5pt}

\put(4.0,2.0){\psline{-,linecolor=blue}(0,0)(0,2)}
\put(1.5,3.5){\psline{->,linecolor=gray}(0.2,-0.15)(1.5,-1.0)}

\put(-0.5,2){\psline{->,linecolor=gray}(0.0,0.0)(9.5,0)}
\put(4.0,-0.5){\psline{->,linecolor=gray}(0.0,0.0)(0.0,5.25)}

\put(8.0,4.0){\psline{-,linecolor=blue}(0,0.0)(0.0,-2.0)(-4,-4)(-8,-2)}
\put(0,2){\psline{-,linecolor=green}(0.0,0.0)(4,1)}

\put(6.0,4.0){\psline{-,linecolor=blue}(0.0,0.0)(0.0,-2.0)(-2,-3)(-4,-2)}
\put(2.0,2.0){\psline{-,linecolor=green}(0,0)(2,1)}

\put(5.0,4.0){\psline{-,linecolor=blue}(0.0,0.0)(0.0,-2.0)(-1,-2.5)(-2,-2)}
\put(3,2.0){\psline{-,linecolor=green}(0,0)(1,1)}

\put(4.5,4.0){\psline{-,linecolor=blue}(0.0,0.0)(0.0,-2.0)(-0.5,-2.25)(-1,-2)}
\put(3.5,2.0){\psline{-,linecolor=green}(0,0)(0.5,1)}

\put(4.0,2.0){\circle*{0.1}}
\put(4.0,4.0){\circle*{0.1}} \put(0,2.0){\circle*{0.1}} \put(4.0,3.0){\circle*{0.1}}

\put(3.75,3.05){$c$} \put(3.75,4.0){$b$} \put(1.5,3.5){$Q_2$}  \put(4.1,2.1){$a$}

\put(7.8,4.2){$L_1$} \put(5,0.2){$P_1$} \put(1.0,2.5){$Q_1$}
\put(8.0,2.0){\circle*{0.1}} \put(8.1,1.6){$(2,0)$}
\put(8.0,4.0){\circle*{0.1}} \put(8.1,3.6){$(2,1)$}
\put(4.0,0){\circle*{0.1}} \put(2.5,-0.25){$(0,-1)$}
\put(0,2){\circle*{0.1}} \put(-1,1.5){$(-2,0)$}

\put(5.8,4.2){$L_2$}  \put(4.8,4.2){$L_3$}
\put(5,1.2){$P_2$}

\textcolor{blue}{
\put(12.5,4){\circle*{0.1}}
\put(12.5,2){\circle{4}} 
\put(12.5,2.4){\circle{3.2}} 
\put(12.5,2.8){\circle{2.4}}
\put(12.5,3.2){\circle{1.6}}
\put(12.5,3.6){\circle{0.8}}
\put(12.5,3.7){\circle{0.6}}
\multiput(12.5,3.62)(0,0.12){3}{\circle*{0.08}} }

\put(12.55,4){\psline{-,linecolor=black}(0,0)(1.3,-0.5)(1.35,-0.37)(0.05,0.13)(0,0)}

\end{pspicture}
\caption{The continuum $X$ and its quotient as a Hawaiian earring minus an open rectangle.}\label{cutting-separating}
\end{figure}
Define $(x_1,x_2)\xrightarrow{\hspace{0.2cm}f\hspace{0.2cm}}(\frac{1}{2}x_1,\frac{1}{2}x_2)$ and  $(x_1,x_2)\xrightarrow{\hspace{0.2cm}g\hspace{0.2cm}}(\frac{1}{2}x_1,x_2)$.
For any $k\ge1$, let $L_{k+1}=g(L_k)$ and $Q_{k+1}=g(Q_k)$; let $P_{k+1}=f(P_k)$. Let $B_k=L_k\cup P_k\cup Q_k$. Then $\{B_k: k\ge1\}$ is a sequence of broken lines converging to the segment $B$ between $a=(0,0)$ and $b=(0,1)$. Let $N=\left(\bigcup_kB_k\right)\bigcup B$. Then $N$ is a continuum, which is not locally connected at each point of $B$. Moreover, the singleton $\{c\}$ is a cutting set, but not a separating set, between the points $a$ and $b$. The only nontrivial fiber is $B=\{0\}\times[0,1]=F_x^*$ for each $x\in B$. So we have $\ell^*(N)=1$. Also, it follows that $[x]_\sim=B$ for all $x\in B$ and $[x]_\sim=\{x\}$ otherwise. In particular, the broken lines $B_k$ are still arcs in the quotient space but, under the metric of quotient space, their diameters converge to zero. Consequently, the quotient $N/\!\sim$ is topologically the difference of a Hawaiian earring with a full open rectangle. See the right part of Figure~\ref{cutting-separating}. In other words, the quotient space $N/\!\sim$ is homeomorphic with the quotient $X/\!\sim$ of Example~\ref{witch-broom}.

\end{exam}

\begin{exam}[{\bf The Witch's Broom}]\label{witch-broom}
Let $X$ be the witch's broom \cite[p.84, Figure 5.22]{Nadler92}. See Figure~\ref{broom}.
\begin{figure}[ht]
\begin{pspicture}(14,3.5)(-0.5,-0.25)
\linethickness{0.5pt}
\put(6.4,0){\psline{-,linecolor=blue,linewidth=0.5pt}(0.0,0.0)(-3.2,3.2)}
\put(6.4,0){\psline{-,linecolor=blue,linewidth=0.5pt}(0.0,0.0)(-3.2,1.6)}
\put(6.4,0){\psline{-,linecolor=blue,linewidth=0.5pt}(0.0,0.0)(-3.2,0.8)}
\put(6.4,0){\psline{-,linecolor=blue,linewidth=0.5pt}(0.0,0.0)(-3.2,0.4)}
\put(6.4,0){\psline{-,linecolor=blue,linewidth=0.5pt}(0.0,0.0)(-3.2,0.2)}
\put(6.4,0){\psline{-,linecolor=blue,linewidth=0.5pt}(0.0,0.0)(-3.2,0.1)}

\put(3.2,0){\psline{-,linecolor=blue,linewidth=0.5pt}(0.0,0.0)(-1.6,1.6)}
\put(3.2,0){\psline{-,linecolor=blue,linewidth=0.5pt}(0.0,0.0)(-1.6,0.8)}
\put(3.2,0){\psline{-,linecolor=blue,linewidth=0.5pt}(0.0,0.0)(-1.6,0.4)}
\put(3.2,0){\psline{-,linecolor=blue,linewidth=0.5pt}(0.0,0.0)(-1.6,0.2)}
\put(3.2,0){\psline{-,linecolor=blue,linewidth=0.5pt}(0.0,0.0)(-1.6,0.1)}

\put(1.6,0){\psline{-,linecolor=blue,linewidth=0.5pt}(0.0,0.0)(-0.8,0.8)}
\put(1.6,0){\psline{-,linecolor=blue,linewidth=0.5pt}(0.0,0.0)(-0.8,0.4)}
\put(1.6,0){\psline{-,linecolor=blue,linewidth=0.5pt}(0.0,0.0)(-0.8,0.2)}
\put(1.6,0){\psline{-,linecolor=blue,linewidth=0.5pt}(0.0,0.0)(-0.8,0.1)}

\put(0.8,0){\psline{-,linecolor=blue,linewidth=0.5pt}(0.0,0.0)(-0.4,0.4)}
\put(0.8,0){\psline{-,linecolor=blue,linewidth=0.5pt}(0.0,0.0)(-0.4,0.2)}
\put(0.8,0){\psline{-,linecolor=blue,linewidth=0.5pt}(0.0,0.0)(-0.4,0.1)}
\put(0.8,0){\psline{-,linecolor=blue,linewidth=0.5pt}(0.0,0.0)(-0.4,0.05)}

\put(0.4,0){\psline{-,linecolor=blue,linewidth=0.5pt}(0.0,0.0)(-0.2,0.2)}
\put(0.4,0){\psline{-,linecolor=blue,linewidth=0.5pt}(0.0,0.0)(-0.2,0.1)}

\put(0.2,0){\psline{-,linecolor=blue,linewidth=0.5pt}(0.0,0.0)(-0.1,0.1)}

\put(0.1,0){\psline{-,linecolor=blue,linewidth=0.5pt}(0.0,0.0)(-0.05,0.05)}

\put(0,0){\psline{-,linecolor=blue,linewidth=0.5pt}(0.0,0.0)(6.4,0)}
\put(6.4,0){\circle*{0.1}}  \put(3.2,0){\circle*{0.1}} \put(1.6,0){\circle*{0.1}}
\put(0.8,0){\circle*{0.1}} \put(0.4,0){\circle*{0.1}}  \put(0,0){\circle*{0.1}}

\textcolor{red}{\put(12,1.6){\circle{4}} }
\put(12,-0.4){\psline{-,linecolor=blue,linewidth=0.5pt}(0.0,0.0)(0,4)}
\put(12,-0.4){\psline{-,linecolor=blue,linewidth=0.5pt}(0.0,0.0)(0.5,3.93)}
\put(12,-0.4){\psline{-,linecolor=blue,linewidth=0.5pt}(0.0,0.0)(0.9,3.78)}
\put(12,-0.4){\psline{-,linecolor=blue,linewidth=0.5pt}(0.0,0.0)(1.3,3.51)}
\put(12,-0.4){\psline{-,linecolor=blue,linewidth=0.5pt}(0.0,0.0)(1.7,3.05)}
\put(12,-0.4){\psline{-,linecolor=blue,linewidth=0.5pt}(0.0,0.0)(1.91,2.6)}
\put(12,-0.4){\psline{-,linecolor=blue,linewidth=0.5pt}(0.0,0.0)(2.0,2.0)}
\put(12,-0.4){\psline{-,linecolor=blue,linewidth=0.5pt}(0.0,0.0)(1.91,1.4)}
\put(12,-0.4){\psline{-,linecolor=blue,linewidth=0.5pt}(0.0,0.0)(1.7,0.95)}
\put(12,-0.4){\psline{-,linecolor=blue,linewidth=0.5pt}(0.0,0.0)(1.3,0.49)}
\multiput(12.3,-0.33)(0.1,0.03){6}{\circle*{0.05}}

\end{pspicture}
\caption{An intuitive depiction of the witch's broom and its quotient space.}\label{broom}
\end{figure}
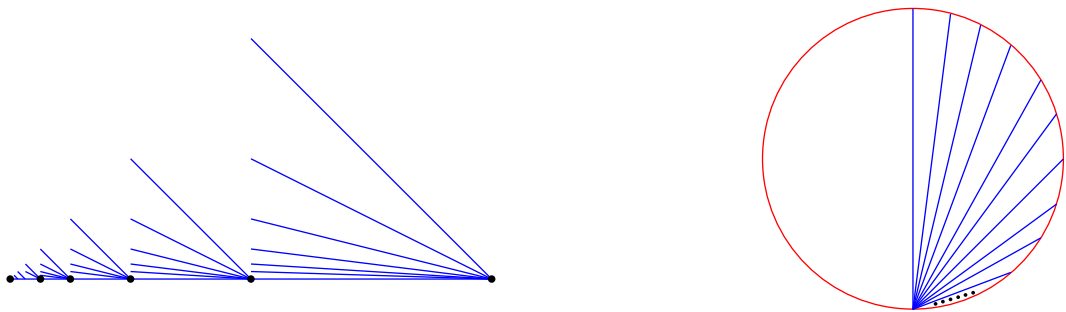
More precisely, let $A_0:=[\frac{1}{2},1]\times\{0\}$; let $A_k$ be the segment connecting $(1,0)$ to $(\frac{1}{2},2^{-k})$ for $k\ge0$. Then $\displaystyle A=\bigcup_{k\ge0}A_k$ is a continuum (an infinite broom) which is locally connected everywhere but at the points on $[\frac{1}{2},1)\times\{0\}$. Let $g(x)=\frac{1}{2}x$ be a similarity contraction on $\bbR^2$. Let
\[X=\{(0,0)\}\cup A\cup f(A)\cup f^2(A)\cup\cdots\cup f^n(A)\cup\cdots\cdots.\]
The continuum $X$ is called the {\em Witch's Broom}. Consider the fibers of $X$, we have $F_x^*=\{x\}$ for each $x$ in $X\cap\{(x_1,x_2): x_2>0\}$ and for $x=(0,0)$. The nontrivial fibers include: $F_{(1,0)}^*=[\frac{1}{2},1]\times\{0\}$, $F_{(2^{-k},0)}^*=[2^{-k-1},2^{-k+1}]\times\{0\}\quad(k\ge1)$,
and
\[ F^*_{(x_1,0)}=[2^{-k},2^{-k+1}]\times\{0\}\quad (2^{-k}<x_1<2^{-k+1}, k\ge1).\]
Consequently, $[x]_\sim=\{x\}$ for each $x$ in $X\cap\{(x_1,x_2): x_2>0\}$, while $[x]_\sim=[0,1]\times\{0\}$ for $x\in[0,1]\times\{0\}$. See the right part of Figure \ref{broom} for a depiction of the quotient $X/\!\sim$.
\end{exam}

\begin{exam}[{\bf Witch's Double Broom}]\label{relations}
Let $X$ be the witch's broom. We call the union $Y$ of $X$ with a translated copy $X+(-1,0)$ the {\em witch's double broom} (see Figure \ref{double-broom}).
\begin{figure}[ht]
\begin{pspicture}(14,3.2)(-0.5,-0.3)
\linethickness{0.5pt}
\put(6.4,0){\psline{-,linecolor=blue,linewidth=0.5pt}(0.0,0.0)(-3.2,3.2)}
\put(6.4,0){\psline{-,linecolor=blue,linewidth=0.5pt}(0.0,0.0)(-3.2,1.6)}
\put(6.4,0){\psline{-,linecolor=blue,linewidth=0.5pt}(0.0,0.0)(-3.2,0.8)}
\put(6.4,0){\psline{-,linecolor=blue,linewidth=0.5pt}(0.0,0.0)(-3.2,0.4)}
\put(6.4,0){\psline{-,linecolor=blue,linewidth=0.5pt}(0.0,0.0)(-3.2,0.2)}
\put(6.4,0){\psline{-,linecolor=blue,linewidth=0.5pt}(0.0,0.0)(-3.2,0.1)}

\put(3.2,0){\psline{-,linecolor=blue,linewidth=0.5pt}(0.0,0.0)(-1.6,1.6)}
\put(3.2,0){\psline{-,linecolor=blue,linewidth=0.5pt}(0.0,0.0)(-1.6,0.8)}
\put(3.2,0){\psline{-,linecolor=blue,linewidth=0.5pt}(0.0,0.0)(-1.6,0.4)}
\put(3.2,0){\psline{-,linecolor=blue,linewidth=0.5pt}(0.0,0.0)(-1.6,0.2)}
\put(3.2,0){\psline{-,linecolor=blue,linewidth=0.5pt}(0.0,0.0)(-1.6,0.1)}

\put(1.6,0){\psline{-,linecolor=blue,linewidth=0.5pt}(0.0,0.0)(-0.8,0.8)}
\put(1.6,0){\psline{-,linecolor=blue,linewidth=0.5pt}(0.0,0.0)(-0.8,0.4)}
\put(1.6,0){\psline{-,linecolor=blue,linewidth=0.5pt}(0.0,0.0)(-0.8,0.2)}
\put(1.6,0){\psline{-,linecolor=blue,linewidth=0.5pt}(0.0,0.0)(-0.8,0.1)}

\put(0.8,0){\psline{-,linecolor=blue,linewidth=0.5pt}(0.0,0.0)(-0.4,0.4)}
\put(0.8,0){\psline{-,linecolor=blue,linewidth=0.5pt}(0.0,0.0)(-0.4,0.2)}
\put(0.8,0){\psline{-,linecolor=blue,linewidth=0.5pt}(0.0,0.0)(-0.4,0.1)}
\put(0.8,0){\psline{-,linecolor=blue,linewidth=0.5pt}(0.0,0.0)(-0.4,0.05)}

\put(0.4,0){\psline{-,linecolor=blue,linewidth=0.5pt}(0.0,0.0)(-0.2,0.2)}
\put(0.4,0){\psline{-,linecolor=blue,linewidth=0.5pt}(0.0,0.0)(-0.2,0.1)}

\put(0.2,0){\psline{-,linecolor=blue,linewidth=0.5pt}(0.0,0.0)(-0.1,0.1)}

\put(0.1,0){\psline{-,linecolor=blue,linewidth=0.5pt}(0.0,0.0)(-0.05,0.05)}

\put(0,0){\psline{-,linecolor=blue,linewidth=0.5pt}(0.0,0.0)(6.4,0)}
\put(6.4,0){\circle*{0.1}}  \put(3.2,0){\circle*{0.1}} \put(1.6,0){\circle*{0.1}}
\put(0.8,0){\circle*{0.1}} \put(0.4,0){\circle*{0.1}}  \put(0,0){\circle*{0.1}}

\put(12.8,0){\psline{-,linecolor=blue,linewidth=0.5pt}(0.0,0.0)(-3.2,3.2)}
\put(12.8,0){\psline{-,linecolor=blue,linewidth=0.5pt}(0.0,0.0)(-3.2,1.6)}
\put(12.8,0){\psline{-,linecolor=blue,linewidth=0.5pt}(0.0,0.0)(-3.2,0.8)}
\put(12.8,0){\psline{-,linecolor=blue,linewidth=0.5pt}(0.0,0.0)(-3.2,0.4)}
\put(12.8,0){\psline{-,linecolor=blue,linewidth=0.5pt}(0.0,0.0)(-3.2,0.2)}
\put(12.8,0){\psline{-,linecolor=blue,linewidth=0.5pt}(0.0,0.0)(-3.2,0.1)}

\put(9.6,0){\psline{-,linecolor=blue,linewidth=0.5pt}(0.0,0.0)(-1.6,1.6)}
\put(9.6,0){\psline{-,linecolor=blue,linewidth=0.5pt}(0.0,0.0)(-1.6,0.8)}
\put(9.6,0){\psline{-,linecolor=blue,linewidth=0.5pt}(0.0,0.0)(-1.6,0.4)}
\put(9.6,0){\psline{-,linecolor=blue,linewidth=0.5pt}(0.0,0.0)(-1.6,0.2)}
\put(9.6,0){\psline{-,linecolor=blue,linewidth=0.5pt}(0.0,0.0)(-1.6,0.1)}

\put(8,0){\psline{-,linecolor=blue,linewidth=0.5pt}(0.0,0.0)(-0.8,0.8)}
\put(8,0){\psline{-,linecolor=blue,linewidth=0.5pt}(0.0,0.0)(-0.8,0.4)}
\put(8,0){\psline{-,linecolor=blue,linewidth=0.5pt}(0.0,0.0)(-0.8,0.2)}
\put(8,0){\psline{-,linecolor=blue,linewidth=0.5pt}(0.0,0.0)(-0.8,0.1)}

\put(7.2,0){\psline{-,linecolor=blue,linewidth=0.5pt}(0.0,0.0)(-0.4,0.4)}
\put(7.2,0){\psline{-,linecolor=blue,linewidth=0.5pt}(0.0,0.0)(-0.4,0.2)}
\put(7.2,0){\psline{-,linecolor=blue,linewidth=0.5pt}(0.0,0.0)(-0.4,0.1)}
\put(7.2,0){\psline{-,linecolor=blue,linewidth=0.5pt}(0.0,0.0)(-0.4,0.05)}

\put(6.8,0){\psline{-,linecolor=blue,linewidth=0.5pt}(0.0,0.0)(-0.2,0.2)}
\put(6.8,0){\psline{-,linecolor=blue,linewidth=0.5pt}(0.0,0.0)(-0.2,0.1)}

\put(6.6,0){\psline{-,linecolor=blue,linewidth=0.5pt}(0.0,0.0)(-0.1,0.1)}

\put(6.5,0){\psline{-,linecolor=blue,linewidth=0.5pt}(0.0,0.0)(-0.05,0.05)}

\put(6.4,0){\psline{-,linecolor=blue,linewidth=0.5pt}(0.0,0.0)(6.4,0)}
\put(12.8,0){\circle*{0.1}}  \put(9.6,0){\circle*{0.1}} \put(8,0){\circle*{0.1}}
\put(7.2,0){\circle*{0.1}} \put(6.8,0){\circle*{0.1}}  \put(6.4,0){\circle*{0.1}}

\put(-0.6,-0.5){$(-1,0)$}  \put(5.9,-0.5){$(0,0)$}  \put(12.3,-0.5){$(1,0)$}
\end{pspicture}
\caption{Relative locations of the points $(\pm1,0)$ and $(0,0)$ in witch's double  broom.}\label{double-broom}
\end{figure}
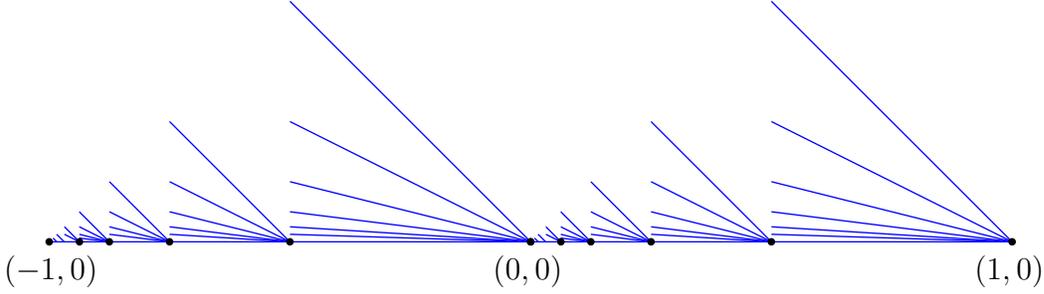
Define $x\approx y$ if there exist points $x_1=x,$ $x_2,\ldots, x_n=y$ in $Y$ such that $x_i\in F^*_{x_{i-1}}$. Then $\approx$ is an equivalence and is not closed. Its closure $\approx^*$ is not transitive, since we have $(-1,0)\approx^*(0,0)$ and $(0,0)\approx^*(1,0)$,  but $(-1,0)$ is not related to $(1,0)$ under $\approx^*$.
\end{exam}

\begin{exam}[{\bf Cantor's Teepee}]\label{cantor-teepee}
Let $X$ be Cantor's Teepee~\cite[p.145]{SS-1995}. See Figure \ref{teepee}. Then the fiber $F_p^*=X$; and for every other point $x$, $F_x^*$ is exactly the line segment on $X$ that crosses $x$ and $p$. Therefore, $\ell^*(X)=1$. Moreover, $[x]_\sim=X$ for every $x$, hence the quotient is a single point. In this case, we also say that $X$ is collapsed to a point.
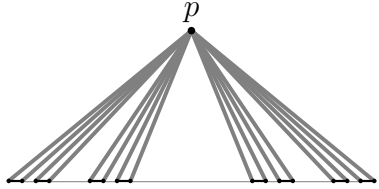
\begin{figure}[ht]
\begin{center}
\begin{pspicture}(6.0,2.25)(0.0,0.25)
\linethickness{1pt}
\rput(3.0,2.25){$p$}
\rput(0.57,-0.01){\psline{-,linecolor=gray,linewidth=0.382pt}(0,0)(4.86,0)}

\rput(3,2.0){\psline{-,linecolor=gray,linewidth=1.5pt}(-2.43,-2.0)(0,0)}
\rput(3,2.0){\psline{-,linecolor=gray,linewidth=1.5pt}(0,0)(2.43,-2.0)}
\rput(3,2.0){\psline{-,linecolor=gray,linewidth=1.5pt}(-0.81,-2.0)(0,0)}
\rput(3,2.0){\psline{-,linecolor=gray,linewidth=1.5pt}(0,0)(0.81,-2.0)}

\rput(3,2.0){\psline{-,linecolor=gray,linewidth=1.5pt}(-1.89,-2.0)(0,0)}
\rput(3,2.0){\psline{-,linecolor=gray,linewidth=1.5pt}(0,0)(1.89,-2.0)}
\rput(3,2.0){\psline{-,linecolor=gray,linewidth=1.5pt}(-1.35,-2.0)(0,0)}
\rput(3,2.0){\psline{-,linecolor=gray,linewidth=1.5pt}(0,0)(1.35,-2.0)}

\rput(3,2.0){\psline{-,linecolor=gray,linewidth=1.5pt}(-2.25,-2.0)(0,0)}
\rput(3,2.0){\psline{-,linecolor=gray,linewidth=1.5pt}(0,0)(2.25,-2.0)}
\rput(3,2.0){\psline{-,linecolor=gray,linewidth=1.5pt}(-2.07,-2.0)(0,0)}
\rput(3,2.0){\psline{-,linecolor=gray,linewidth=1.5pt}(0,0)(2.07,-2.0)}

\rput(3,2.0){\psline{-,linecolor=gray,linewidth=1.5pt}(-1.17,-2.0)(0,0)}
\rput(3,2.0){\psline{-,linecolor=gray,linewidth=1.5pt}(0,0)(1.17,-2.0)}
\rput(3,2.0){\psline{-,linecolor=gray,linewidth=1.5pt}(-0.99,-2.0)(0,0)}
\rput(3,2.0){\psline{-,linecolor=gray,linewidth=1.5pt}(0,0)(0.99,-2.0)}

\rput(3,2.0){\circle*{0.10}}

\rput(3,2.0){\psline{-,linecolor=black}(-2.43,-2.0)(-2.25,-2.0)}
\rput(3,2.0){\psline{-,linecolor=black}(-2.07,-2.0)(-1.89,-2.0)}
\rput(3,2.0){\psline{-,linecolor=black}(-1.35,-2.0)(-1.17,-2.0)}
\rput(3,2.0){\psline{-,linecolor=black}(-0.99,-2.0)(-0.81,-2.0)}

\rput(3,2.0){\psline{-,linecolor=black}(2.43,-2.0)(2.25,-2.0)}
\rput(3,2.0){\psline{-,linecolor=black}(2.07,-2.0)(1.89,-2.0)}
\rput(3,2.0){\psline{-,linecolor=black}(1.35,-2.0)(1.17,-2.0)}
\rput(3,2.0){\psline{-,linecolor=black}(0.99,-2.0)(0.81,-2.0)}

\multiput(0.57,0)(0.18,0){4}{\circle*{0.06}}
\multiput(1.65,0)(0.18,0){4}{\circle*{0.06}}
\multiput(3.81,0)(0.18,0){4}{\circle*{0.06}}
\multiput(4.89,0)(0.18,0){4}{\circle*{0.06}}
\end{pspicture}\end{center}
\caption{A simple representation of Cantor's Teepee.}\label{teepee}
\end{figure}
\end{exam}

\begin{exam}[{\bf Cantor's Comb}]\label{cantor-comb}
Let $\Kc\subset[0,1]$ be Cantor's ternary set. Let $X$ be the union of $\Kc\times[0,1]$ with $[0,1]\times\{1\}$. See Figure \ref{comb}. We call $X$ the Cantor comb. Then the fiber $F_x^*=\{x\}$ for every point on $X$ that is off $\Kc\times[0,1]$; and for every point $x$ on $\Kc\times[0,1]$, the fiber $F_x^*$ is exactly the vertical line segment on $\Kc\times[0,1]$ that contains $x$. Therefore, $\ell^*(X)=1$. Moreover, $[x]_\sim=F_x^*$ for every $x$, hence the quotient is homeomorphic to $[0,1]$. Here, we note that $X$ is locally connected at every point lying on the common part of $[0,1]\times\{1\}$ and $\Kc\times[0,1]$, although the fibers at those points are each a non-degenerate segment.
\begin{figure}[ht]
\begin{tabular}{ccc}
\begin{pspicture}(2.0,2.0)(-2,0)
\linethickness{1pt}

\multiput(0,0)(0.08,0){4}{\psline{-,linecolor=gray,linewidth=1.5pt}(0,0)(0,2.16)}
\multiput(0.48,0)(0.08,0){4}{\psline{-,linecolor=gray,linewidth=1.5pt}(0,0)(0,2.16)}
\multiput(1.44,0)(0.08,0){4}{\psline{-,linecolor=gray,linewidth=1.5pt}(0,0)(0,2.16)}
\multiput(1.92,0)(0.08,0){4}{\psline{-,linecolor=gray,linewidth=1.5pt}(0,0)(0,2.16)}

\rput(-0.025,2.16){\psline{-,linecolor=gray,linewidth=2pt}(0,0)(2.21,0)}

\end{pspicture}
&
\begin{pspicture}(2.0,2.0)(-2,0)
\linethickness{1pt}

\multiput(0,0)(0.08,0){4}{\psline{-,linecolor=gray,linewidth=1.5pt}(0,0)(0,2.16)}
\multiput(0.48,0)(0.08,0){4}{\psline{-,linecolor=gray,linewidth=1.5pt}(0,0)(0,2.16)}
\multiput(1.44,0)(0.08,0){4}{\psline{-,linecolor=gray,linewidth=1.5pt}(0,0)(0,2.16)}
\multiput(1.92,0)(0.08,0){4}{\psline{-,linecolor=gray,linewidth=1.5pt}(0,0)(0,2.16)}


\end{pspicture}
&
\begin{pspicture}(4.0,2.0)(0,0)
\linethickness{1pt}
\rput(2,2.16){\psline{-,linecolor=blue,linewidth=2pt}(0,0)(2.16,0)}
\end{pspicture}
\end{tabular}
\caption{Cantor's Comb, its nontrivial fibers, and the quotient $X/\!\sim$.}\label{comb}
\end{figure}
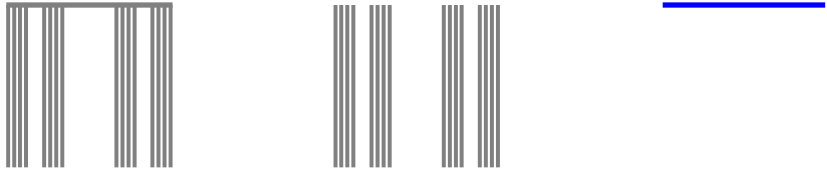
\end{exam}

\begin{exam}[{\bf More Combs}]\label{more-combs}
We use Cantor's ternary set $\Kc\subset[0,1]$ to construct a sequence of continua $\{X_k: k\ge1\}$, such that the scale $\ell^*(X_k)=k$ for all $k\ge1$. We also determine the fibers and compute the quotient spaces $X_k/\!\sim$. Let $X_1$ be the union of $X_1'=(\Kc+1)\times[0,2]$ with $[1,2]\times\{2\}$. Here $\displaystyle \Kc+1:=\left\{x_1+1: x_1\in\Kc\right\}$. Then $X_1$ is homeomorphic with Cantor's Comb defined in Example \ref{cantor-comb}. We have $\ell^*(X_1)=1$ and  that $X_1/\!\sim$ is homeomorphic with $[0,1]$. Let $X_2$ be the union of $X_1$ with $\displaystyle [0,1]\times(\Kc+1)$. See Figure \ref{combs}.
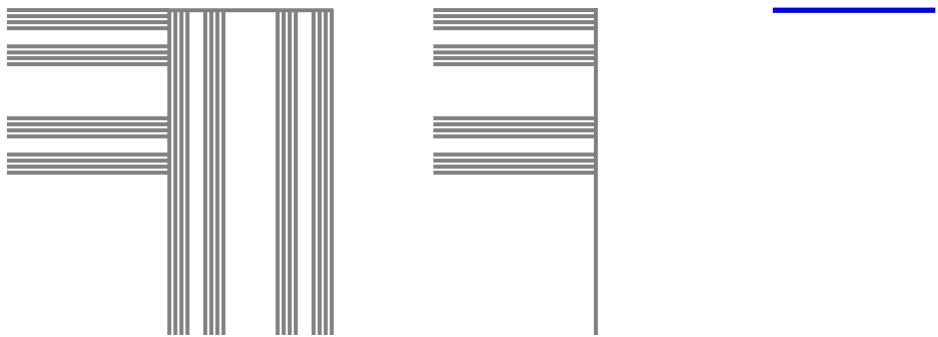
\begin{figure}[ht]
\begin{tabular}{ccc}
\begin{pspicture}(5.32,4.2)(0,0.25)
\linethickness{1.5pt}

\multiput(3.16,0)(0.08,0){4}{\psline{-,linecolor=gray,linewidth=1.5pt}(0,0)(0,4.32)}
\multiput(3.64,0)(0.08,0){4}{\psline{-,linecolor=gray,linewidth=1.5pt}(0,0)(0,4.32)}
\multiput(4.6,0)(0.08,0){4}{\psline{-,linecolor=gray,linewidth=1.5pt}(0,0)(0,4.32)}
\multiput(5.08,0)(0.08,0){4}{\psline{-,linecolor=gray,linewidth=1.5pt}(0,0)(0,4.32)}

\multiput(1,2.16)(0,0.08){4}{\psline{-,linecolor=gray,linewidth=1.5pt}(0,0)(2.16,0)}
\multiput(1,2.64)(0,0.08){4}{\psline{-,linecolor=gray,linewidth=1.5pt}(0,0)(2.16,0)}
\multiput(1,3.6)(0,0.08){4}{\psline{-,linecolor=gray,linewidth=1.5pt}(0,0)(2.16,0)}
\multiput(1,4.08)(0,0.08){4}{\psline{-,linecolor=gray,linewidth=1.5pt}(0,0)(2.16,0)}

\rput(3.16,4.32){\psline{-,linecolor=gray,linewidth=1.5pt}(0,0)(2.184,0)}

\end{pspicture}
&
\begin{pspicture}(3,2.0)(0,0.25)
\linethickness{1.5pt}

\multiput(1,2.16)(0,0.08){4}{\psline{-,linecolor=gray,linewidth=1.5pt}(0,0)(2.16,0)}
\multiput(1,2.64)(0,0.08){4}{\psline{-,linecolor=gray,linewidth=1.5pt}(0,0)(2.16,0)}
\multiput(1,3.6)(0,0.08){4}{\psline{-,linecolor=gray,linewidth=1.5pt}(0,0)(2.16,0)}
\multiput(1,4.08)(0,0.08){4}{\psline{-,linecolor=gray,linewidth=1.5pt}(0,0)(2.16,0)}

\rput(3.16,0){\psline{-,linecolor=gray,linewidth=1.5pt}(0,0)(0,4.35)}

\end{pspicture}
&
\begin{pspicture}(4,4.2)(0,0.25)
\linethickness{1pt}
\rput(2.16,4.32){\psline{-,linecolor=blue,linewidth=2pt}(0,0)(2.16,0)}
\end{pspicture}
\end{tabular}
\caption{A simple depiction of $X_2$, the largest fiber, and the quotient $X_2/\!\sim$.}\label{combs}
\end{figure}
Then the fiber of $X_2$ at the point $(1,2)\in X_2$ is $F_{(1,2)}^*=X_2\cap\{(x_1,x_2): x_1\le 1\}$, which will be referred to as the ``largest fiber'', since it is the fiber with the largest scale in $X_2$. See the central part of Figure~\ref{combs}. The other fibers are either a single point or a segment, of the form $\{(x_1,x_2): 0\le x_2\le 2\}$ for some $x_1\in \Kc+1$. Therefore, we have $\ell^*(X_2)=2$ and can check that the quotient $X_2/\!\sim$ is homeomorphic with $[0,1]$. Let $X_3$ be the union of $X_2$ with \[
\displaystyle \frac{X_1}{2}=\left\{\left(\frac{x_1}{2},\frac{x_2}{2}\right): (x_1,x_2)\in X_1\right\}.\]
Then the largest fiber of $X_3$ is exactly  $F_{(1,2)}^*=X_3\cap\{(x_1,x_2): x_1\le 1\}$, which is homeomorphic with $X_2$. Therefore, $\ell^*(X_3)=3$; moreover, $X_3/\!\sim$ is also homeomorphic with $[0,1]$. See upper part of Figure \ref{scale-4}.
\begin{figure}[ht]
\begin{tabular}{ccc}
\begin{pspicture}(5.32,4.2)(0,0.25)
\linethickness{1pt}

\multiput(3.16,0)(0.08,0){4}{\psline{-,linecolor=gray,linewidth=1pt}(0,0)(0,4.32)}
\multiput(3.64,0)(0.08,0){4}{\psline{-,linecolor=gray,linewidth=1pt}(0,0)(0,4.32)}
\multiput(4.6,0)(0.08,0){4}{\psline{-,linecolor=gray,linewidth=1pt}(0,0)(0,4.32)}
\multiput(5.08,0)(0.08,0){4}{\psline{-,linecolor=gray,linewidth=1pt}(0,0)(0,4.32)}

\multiput(1,2.16)(0,0.08){4}{\psline{-,linecolor=gray,linewidth=1pt}(0,0)(2.16,0)}
\multiput(1,2.64)(0,0.08){4}{\psline{-,linecolor=gray,linewidth=1pt}(0,0)(2.16,0)}
\multiput(1,3.6)(0,0.08){4}{\psline{-,linecolor=gray,linewidth=1pt}(0,0)(2.16,0)}
\multiput(1,4.08)(0,0.08){4}{\psline{-,linecolor=gray,linewidth=1pt}(0,0)(2.16,0)}

\multiput(2.08,0)(0.04,0){4}{\psline{-,linecolor=gray,linewidth=1pt}(0,0)(0,2.16)}
\multiput(2.32,0)(0.04,0){4}{\psline{-,linecolor=gray,linewidth=1pt}(0,0)(0,2.16)}
\multiput(2.8,0)(0.04,0){4}{\psline{-,linecolor=gray,linewidth=1pt}(0,0)(0,2.16)}
\multiput(3.04,0)(0.04,0){4}{\psline{-,linecolor=gray,linewidth=1pt}(0,0)(0,2.16)}

\rput(3.16,4.32){\psline{-,linecolor=gray,linewidth=1pt}(0,0)(2.175,0)}

\end{pspicture}
&
\begin{pspicture}(4,4.2)(0,0.25)
\linethickness{1pt}

\multiput(1,2.16)(0,0.08){4}{\psline{-,linecolor=gray,linewidth=1pt}(0,0)(2.16,0)}
\multiput(1,2.64)(0,0.08){4}{\psline{-,linecolor=gray,linewidth=1pt}(0,0)(2.16,0)}
\multiput(1,3.6)(0,0.08){4}{\psline{-,linecolor=gray,linewidth=1pt}(0,0)(2.16,0)}
\multiput(1,4.08)(0,0.08){4}{\psline{-,linecolor=gray,linewidth=1pt}(0,0)(2.16,0)}

\multiput(2.08,0)(0.04,0){4}{\psline{-,linecolor=gray,linewidth=1pt}(0,0)(0,2.16)}
\multiput(2.32,0)(0.04,0){4}{\psline{-,linecolor=gray,linewidth=1pt}(0,0)(0,2.16)}
\multiput(2.8,0)(0.04,0){4}{\psline{-,linecolor=gray,linewidth=1pt}(0,0)(0,2.16)}
\multiput(3.04,0)(0.04,0){4}{\psline{-,linecolor=gray,linewidth=1pt}(0,0)(0,2.16)}

\rput(3.16,0){\psline{-,linecolor=gray,linewidth=1pt}(0,0)(0,4.32)}

\end{pspicture}
&
\begin{pspicture}(4,4.2)(0,0.25)
\linethickness{1pt}
\rput(1.16,4.32){\psline{-,linecolor=blue,linewidth=2pt}(0,0)(2.16,0)}
\end{pspicture}
\\

\begin{pspicture}(5.32,5)(0,0.25)
\linethickness{1pt}

\multiput(3.16,0)(0.08,0){4}{\psline{-,linecolor=gray,linewidth=1pt}(0,0)(0,4.32)}
\multiput(3.64,0)(0.08,0){4}{\psline{-,linecolor=gray,linewidth=1pt}(0,0)(0,4.32)}
\multiput(4.6,0)(0.08,0){4}{\psline{-,linecolor=gray,linewidth=1pt}(0,0)(0,4.32)}
\multiput(5.08,0)(0.08,0){4}{\psline{-,linecolor=gray,linewidth=1pt}(0,0)(0,4.32)}

\multiput(1,2.16)(0,0.08){4}{\psline{-,linecolor=gray,linewidth=1pt}(0,0)(2.16,0)}
\multiput(1,2.64)(0,0.08){4}{\psline{-,linecolor=gray,linewidth=1pt}(0,0)(2.16,0)}
\multiput(1,3.6)(0,0.08){4}{\psline{-,linecolor=gray,linewidth=1pt}(0,0)(2.16,0)}
\multiput(1,4.08)(0,0.08){4}{\psline{-,linecolor=gray,linewidth=1pt}(0,0)(2.16,0)}

\multiput(2.08,0)(0.04,0){4}{\psline{-,linecolor=gray,linewidth=1pt}(0,0)(0,2.16)}
\multiput(2.32,0)(0.04,0){4}{\psline{-,linecolor=gray,linewidth=1pt}(0,0)(0,2.16)}
\multiput(2.8,0)(0.04,0){4}{\psline{-,linecolor=gray,linewidth=1pt}(0,0)(0,2.16)}
\multiput(3.04,0)(0.04,0){4}{\psline{-,linecolor=gray,linewidth=1pt}(0,0)(0,2.16)}

\multiput(1,1.08)(0,0.04){4}{\psline{-,linecolor=gray,linewidth=1pt}(0,0)(1.08,0)}
\multiput(1,1.32)(0,0.04){4}{\psline{-,linecolor=gray,linewidth=1pt}(0,0)(1.08,0)}
\multiput(1,1.8)(0,0.04){4}{\psline{-,linecolor=gray,linewidth=1pt}(0,0)(1.08,0)}
\multiput(1,2.04)(0,0.04){4}{\psline{-,linecolor=gray,linewidth=1pt}(0,0)(1.08,0)}

\rput(3.16,4.32){\psline{-,linecolor=gray,linewidth=1pt}(0,0)(2.175,0)}

\end{pspicture}
&
\begin{pspicture}(4,5)(0,0.25)
\linethickness{1pt}

\multiput(1,2.16)(0,0.08){4}{\psline{-,linecolor=gray,linewidth=1pt}(0,0)(2.16,0)}
\multiput(1,2.64)(0,0.08){4}{\psline{-,linecolor=gray,linewidth=1pt}(0,0)(2.16,0)}
\multiput(1,3.6)(0,0.08){4}{\psline{-,linecolor=gray,linewidth=1pt}(0,0)(2.16,0)}
\multiput(1,4.08)(0,0.08){4}{\psline{-,linecolor=gray,linewidth=1pt}(0,0)(2.16,0)}

\multiput(2.08,0)(0.04,0){4}{\psline{-,linecolor=gray,linewidth=1pt}(0,0)(0,2.16)}
\multiput(2.32,0)(0.04,0){4}{\psline{-,linecolor=gray,linewidth=1pt}(0,0)(0,2.16)}
\multiput(2.8,0)(0.04,0){4}{\psline{-,linecolor=gray,linewidth=1pt}(0,0)(0,2.16)}
\multiput(3.04,0)(0.04,0){4}{\psline{-,linecolor=gray,linewidth=1pt}(0,0)(0,2.16)}

\multiput(1,1.08)(0,0.04){4}{\psline{-,linecolor=gray,linewidth=1pt}(0,0)(1.08,0)}
\multiput(1,1.32)(0,0.04){4}{\psline{-,linecolor=gray,linewidth=1pt}(0,0)(1.08,0)}
\multiput(1,1.8)(0,0.04){4}{\psline{-,linecolor=gray,linewidth=1pt}(0,0)(1.08,0)}
\multiput(1,2.04)(0,0.04){4}{\psline{-,linecolor=gray,linewidth=1pt}(0,0)(1.08,0)}

\rput(3.16,0){\psline{-,linecolor=gray,linewidth=1pt}(0,0)(0,4.32)}

\end{pspicture}
&
\begin{pspicture}(4,5)(0,0.25)
\linethickness{1pt}
\rput(1.16,4.32){\psline{-,linecolor=blue,linewidth=2pt}(0,0)(2.16,0)}
\end{pspicture}

\end{tabular}
\caption{A depiction of $X_3, X_4$, the largest fibers, and the quotients $X_3/\!\sim$ and $X_4/\!\sim$ .}\label{scale-4}
\end{figure}
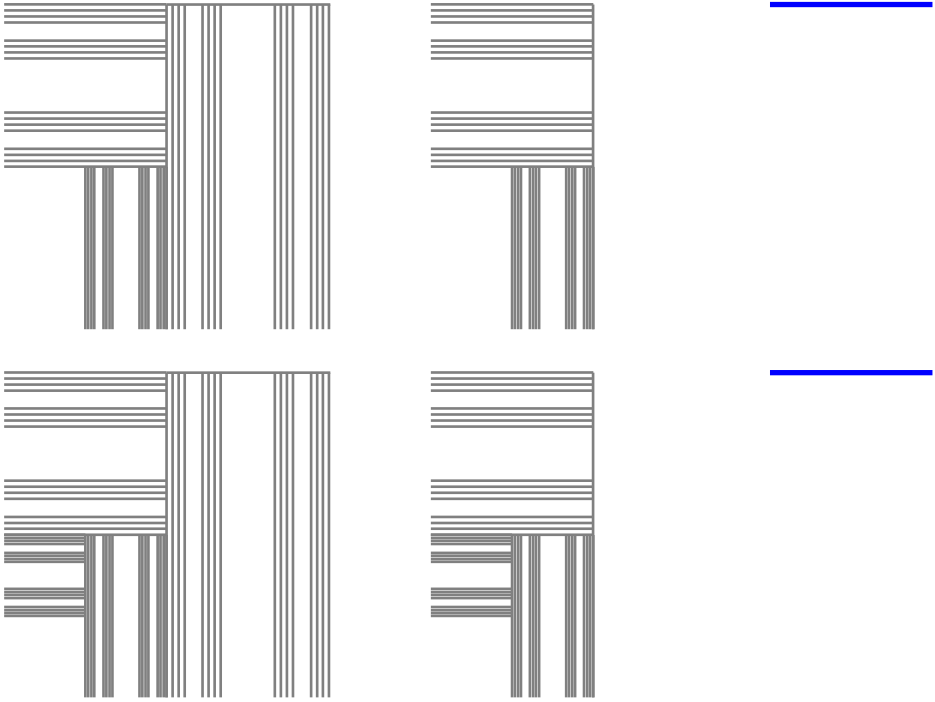
Let $X_4=X_2\cup\frac{X_2}{2}$. Then the largest fiber of $X_4$ is $F_{(1,2)}^*=X_4\cap\{(x_1,x_2): x_1\le 1\}$, which is homeomorphic with $X_3$. Similarly, we can infer that $\ell^*(X_4)=4$ and that $X_4/\!\sim$ is homeomorphic with $[0,1]$. See lower part of Figure \ref{scale-4}. The construction of $X_k$ for $k\ge5$ can be done inductively. The general formula
$X_{k+2}=X_2\bigcup\frac{1}{2}X_k$ defines a path-connected continuum for all $k\ge3$, for which the largest fiber is homeomorphic to $X_{k+1}$. Therefore, we have $\ell^*(X_k)=k$; moreover, the quotient space $X_k/\!\sim$ is always homeomorphic to the interval $[0,1]$. Finally, we can verify that \[X_\infty=\{(0,0)\}\cup\left(\bigcup_{k=2}^\infty X_k\right)\]
is a path connected continuum and that its largest fiber is homeomorphic to $X_\infty$ itself. Therefore, $X_\infty$ has a scale $\ell^*(X_\infty)=\infty$, and its quotient is homeomorphic to $[0,1]$.
\end{exam}

\textbf{Acknowledgements}

The authors are grateful to the referee for very helpful remarks, especially those about a gap in the proof for Theorem \ref{E=F} and an improved proof for Lemma \ref{fact-1}. The first author was supported by the Agence Nationale de la Recherche (ANR) and the Austrian Science Fund
(FWF) through the project \emph{Fractals and Numeration} ANR-FWF I1136 and the FWF Project 22 855.
The second author was supported by the Chinese National Natural Science Foundation Projects 10971233 and 11171123.

\normalsize
\baselineskip=17pt
\bibliographystyle{plain}
\bibliography{biblio}

\end{document}